\DeclareMathAlphabet\gothic{U}{euf}{m}{n}
\def\eqnarray{\stepcounter{equation}\let\@currentlabel=\theequation
\global\@eqnswtrue
\tabskip\@centering\let\\=\@eqncr
$$\halign to \displaywidth\bgroup\hfil\global\@eqcnt\z@
  $\displaystyle\tabskip\z@{##}$&\global\@eqcnt\@ne
  \hfil$\displaystyle{{}##{}}$\hfil
  &\global\@eqcnt\tw@ $\displaystyle{##}$\hfil
  \tabskip\@centering&\llap{##}\tabskip\z@\cr}
\def\endeqnarray{\@@eqncr\egroup
      \global\advance\c@equation\m@ne$$\global\@ignoretrue}
\def\@yeqncr{\@ifnextchar [{\@xeqncr}{\@xeqncr[5pt]}}
\begin{document}
\bibliographystyle{tom}

\newtheorem{lemma}{Lemma}[section]
\newtheorem{thm}[lemma]{Theorem}
\newtheorem{cor}[lemma]{Corollary}
\newtheorem{prop}[lemma]{Proposition}

\theoremstyle{definition}

\newtheorem{remark}[lemma]{Remark}
\newtheorem{exam}[lemma]{Example}
\newtheorem{definition}[lemma]{Definition}

\newcommand{\gota}{\gothic{a}}
\newcommand{\gotb}{\gothic{b}}
\newcommand{\gotc}{\gothic{c}}
\newcommand{\gote}{\gothic{e}}
\newcommand{\gotf}{\gothic{f}}
\newcommand{\gotg}{\gothic{g}}
\newcommand{\gothh}{\gothic{h}}
\newcommand{\gotk}{\gothic{k}}
\newcommand{\gotl}{\gothic{l}}
\newcommand{\gotm}{\gothic{m}}
\newcommand{\gotn}{\gothic{n}}
\newcommand{\gotp}{\gothic{p}}
\newcommand{\gotq}{\gothic{q}}
\newcommand{\gotr}{\gothic{r}}
\newcommand{\gots}{\gothic{s}}
\newcommand{\gott}{\gothic{t}}
\newcommand{\gotu}{\gothic{u}}
\newcommand{\gotv}{\gothic{v}}
\newcommand{\gotw}{\gothic{w}}
\newcommand{\gotz}{\gothic{z}}
\newcommand{\gotA}{\gothic{A}}
\newcommand{\gotB}{\gothic{B}}
\newcommand{\gotG}{\gothic{G}}
\newcommand{\gotL}{\gothic{L}}
\newcommand{\gotS}{\gothic{S}}
\newcommand{\gotT}{\gothic{T}}

\newcounter{teller}
\renewcommand{\theteller}{(\alph{teller})}
\newenvironment{tabel}{\begin{list}%
{\rm  (\alph{teller})\hfill}{\usecounter{teller} \leftmargin=1.1cm
\labelwidth=1.1cm \labelsep=0cm \parsep=0cm}
                      }{\end{list}}

\newcounter{tellerr}
\renewcommand{\thetellerr}{(\roman{tellerr})}
\newenvironment{tabeleq}{\begin{list}%
{\rm  (\roman{tellerr})\hfill}{\usecounter{tellerr} \leftmargin=1.1cm
\labelwidth=1.1cm \labelsep=0cm \parsep=0cm}
                         }{\end{list}}

\newcounter{tellerrr}
\renewcommand{\thetellerrr}{(\Roman{tellerrr})}
\newenvironment{tabelR}{\begin{list}%
{\rm  (\Roman{tellerrr})\hfill}{\usecounter{tellerrr} \leftmargin=1.1cm
\labelwidth=1.1cm \labelsep=0cm \parsep=0cm}
                         }{\end{list}}

\newcounter{proofstep}
\newcommand{\nextstep}{\refstepcounter{proofstep}\vertspace \par 
          \noindent{\bf Step \theproofstep} \hspace{5pt}}
\newcommand{\firststep}{\setcounter{proofstep}{0}\nextstep}

\newcommand{\Ni}{\mathds{N}}
\newcommand{\Ki}{\mathds{K}}
\newcommand{\Qi}{\mathds{Q}}
\newcommand{\Ri}{\mathds{R}}
\newcommand{\Ci}{\mathds{C}}
\newcommand{\Ti}{\mathds{T}}
\newcommand{\Zi}{\mathds{Z}}
\newcommand{\Fi}{\mathds{F}}

\renewcommand{\proofname}{{\bf Proof}}

\newcommand{\simh}{{\stackrel{{\rm cap}}{\sim}}}
\newcommand{\ad}{{\mathop{\rm ad}}}
\newcommand{\Ad}{{\mathop{\rm Ad}}}
\newcommand{\alg}{{\mathop{\rm alg}}}
\newcommand{\clalg}{{\mathop{\overline{\rm alg}}}}
\newcommand{\Aut}{\mathop{\rm Aut}}
\newcommand{\arccot}{\mathop{\rm arccot}}
\newcommand{\capp}{{\mathop{\rm cap}}}
\newcommand{\rcapp}{{\mathop{\rm rcap}}}
\newcommand{\diam}{\mathop{\rm diam}}
\newcommand{\divv}{\mathop{\rm div}}
\newcommand{\dom}{\mathop{\rm dom}}
\newcommand{\codim}{\mathop{\rm codim}}
\newcommand{\RRe}{\mathop{\rm Re}}
\newcommand{\IIm}{\mathop{\rm Im}}
\newcommand{\tr}{{\mathop{\rm Tr \,}}}
\newcommand{\Tr}{{\mathop{\rm Tr \,}}}
\newcommand{\Vol}{{\mathop{\rm Vol}}}
\newcommand{\card}{{\mathop{\rm card}}}
\newcommand{\rank}{\mathop{\rm rank}}
\newcommand{\supp}{\mathop{\rm supp}}
\newcommand{\sgn}{\mathop{\rm sgn}}
\newcommand{\essinf}{\mathop{\rm ess\,inf}}
\newcommand{\esssup}{\mathop{\rm ess\,sup}}
\newcommand{\Int}{\mathop{\rm Int}}
\newcommand{\lcm}{\mathop{\rm lcm}}
\newcommand{\graph}{\mathop{\rm graph}}
\newcommand{\loc}{{\rm loc}}
\newcommand{\HS}{{\rm HS}}
\newcommand{\Trn}{{\rm Tr}}
\newcommand{\n}{{\rm N}}
\newcommand{\WOT}{{\rm WOT}}
\newcommand{\dd}{{\rm d}}
\newcommand{\rc}{{\rm c}}

\newcommand{\at}{@}

\newcommand{\mod}{\mathop{\rm mod}}
\newcommand{\spann}{\mathop{\rm span}}
\newcommand{\one}{\mathds{1}}

\hyphenation{groups}
\hyphenation{unitary}

\newcommand{\tfrac}[2]{{\textstyle \frac{#1}{#2}}}

\newcommand{\ca}{{\cal A}}
\newcommand{\cb}{{\cal B}}
\newcommand{\cc}{{\cal C}}
\newcommand{\cd}{{\cal D}}
\newcommand{\ce}{{\cal E}}
\newcommand{\cf}{{\cal F}}
\newcommand{\ch}{{\cal H}}
\newcommand{\chs}{{\cal HS}}
\newcommand{\ci}{{\cal I}}
\newcommand{\ck}{{\cal K}}
\newcommand{\cl}{{\cal L}}
\newcommand{\cm}{{\cal M}}
\newcommand{\cn}{{\cal N}}
\newcommand{\co}{{\cal O}}
\newcommand{\cp}{{\cal P}}
\newcommand{\cs}{{\cal S}}
\newcommand{\ct}{{\cal T}}
\newcommand{\cv}{{\cal V}}
\newcommand{\cx}{{\cal X}}
\newcommand{\cy}{{\cal Y}}
\newcommand{\cz}{{\cal Z}}

\thispagestyle{empty}

\vspace*{1cm}
\begin{center}
{\Large\bf Nittka's invariance criterion and \\[5pt]
Hilbert space 
valued parabolic equations in $L_p$} \\[4mm]

\large W. Arendt$^1$, A.F.M. ter Elst$^2$ and M. Sauter$^1$

\end{center}

\vspace{4mm}

\begin{center}
{\bf Abstract}
\end{center}

\begin{list}{}{\leftmargin=1.8cm \rightmargin=1.8cm \listparindent=10mm 
   \parsep=0pt}
\item
Nittka gave an efficient criterion on a form defined on $L_2(\Omega)$ 
which implies that the associated semigroup is $L_p$-invariant
for some given $p \in (1,\infty)$.
We extend this criterion to the Hilbert space valued~$L_2(\Omega,H)$.
As an application we consider elliptic systems of pure second order.
Our main result shows that the induced semigroup is $L_p$-contractive
for all $p \in [p_-,p_+]$ for some $1 < p_- < 2 < p_+ < \infty$.
\end{list}

\vspace{6mm}
\noindent
October 2023.

\vspace{3mm}
\noindent
MSC (2020): 35B45, 35K15 46B20.

\vspace{3mm}
\noindent
Keywords: Semigroup, $L_p$-contraction, sesquilinear forms, 
strictly convex Banach space.

\vspace{6mm}

\noindent
{\bf Home institutions:}    \\[3mm]
\begin{tabular}{@{}cl@{\hspace{10mm}}cl}
1. & Institute of Applied Analysis & 
2. & Department of Mathematics  \\
& Ulm University & 
  & University of Auckland  \\
& Helmholtzstr.\ 18  &
  & Private bag 92019 \\
& 89081 Ulm  &
   & Auckland 1142 \\ 
& Germany  &
  & New Zealand \\[8mm]
\end{tabular}

\newpage

\section{Introduction} \label{Spacc1}

Let $(S_t)_{t > 0}$ be a $C_0$-semigroup on $L_2(\Omega)$, 
with $\Omega \subset \Ri^d$, which is associated with a closed 
sesquilinear form.
Ouhabaz \cite{Ouh} gave a convenient criterion on the 
form characterising when the semigroup is $L_\infty$-contractive.
It is more complicated to describe $L_p$-contractivity if 
$1 < p < \infty$.
The reason is the fact that there is no explicit formula which describes
the orthogonal projection from $L_2(\Omega)$ to the 
closed convex set $ \{ u \in L_2(\Omega) : \|u\|_p \leq 1 \} $
if $1 < p < \infty$ and $p \neq 2$.

Nonetheless, Nittka \cite{Nit5} succeeded to overcome the difficulty by a structural 
analysis and developed an efficient criterion for the 
$L_p$-contractivity of a $C_0$-semigroup on $L_2(\Omega)$ 
which is associated with a form.

The purpose of this paper is twofold.
Our first aim is to present Nittka's result.
We do this within the more general setting of the 
vector-valued space $L_2(\Omega,H)$, where $H$ is a Hilbert space.
The result is the following.

\begin{thm} \label{tpacc101}
Let $(\Omega,\cb,\mu)$ be a $\sigma$-finite measure space.
Let $H$ be a Hilbert space.
Fix $p \in (1,\infty)$.
Define 
\[
C = \{ u \in L_2(\Omega,H) \cap L_p(\Omega,H) : \|u\|_{L_p(\Omega,H)} \leq 1 \} 
.  \]
Let $P$ be the orthogonal projection of $L_2(\Omega,H)$ onto $C$.
Let $\cv$ be a Hilbert space which is continuously and 
densely embedded in $L_2(\Omega,H)$.
Let $\gota \colon \cv \times \cv \to \Ci$ be a continuous elliptic sesquilinear form,
let $A$ be the operator in $L_2(\Omega,H)$ associated with $\gota$
and let $S$ be the semigroup generated by $-A$.
Then the following are equivalent.
\begin{tabeleq}
\item \label{tpacc101-1}
$\|S_t u\|_{L_p(\Omega,H)} \leq \|u\|_{L_p(\Omega,H)}$
for all $u \in L_2(\Omega,H) \cap L_p(\Omega,H)$ and $t > 0$.
\item \label{tpacc101-2}
$P \cv \subset \cv$ and
$\RRe \gota(u, \|u\|_H^{p-1} \, \sgn u) \geq 0$
for all $u \in \cv$
with $\|u\|_H^{p-1} \, \sgn u \in \cv$.
\end{tabeleq}
\end{thm}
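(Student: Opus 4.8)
The plan is to deduce the theorem from Ouhabaz's invariance criterion \cite{Ouh} for closed convex sets, the substance of the argument being an explicit description of the orthogonal projection $P$ onto the $L_p$-ball $C$.

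First I would record the preliminary reductions. The set $C$ is convex by Minkowski's inequality and closed in $L_2(\Omega,H)$: if $u_n \in C$ and $u_n \to u$ in $L_2(\Omega,H)$, then passing to an a.e.\ convergent subsequence and applying Fatou's lemma gives $\|u\|_{L_p(\Omega,H)} \le \liminf_n \|u_n\|_{L_p(\Omega,H)} \le 1$, so $u \in C$. Hence $C$ is a nonempty closed convex subset of the Hilbert space $L_2(\Omega,H)$ and $P$ is well defined. Since $S$ is linear and $C$ is the closed unit ball of $\|\cdot\|_{L_p(\Omega,H)}$ on $L_2(\Omega,H) \cap L_p(\Omega,H)$, a homogeneity argument (normalising nonzero $u$) shows that \ref{tpacc101-1} holds if and only if $S_t C \subseteq C$ for all $t > 0$. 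By Ouhabaz's invariance criterion the latter is equivalent to $P\cv \subseteq \cv$ together with $\RRe \gota(Pu, u - Pu) \ge 0$ for all $u \in \cv$. As the clause $P\cv \subseteq \cv$ also appears in \ref{tpacc101-2}, it remains, assuming $P\cv \subseteq \cv$, to match the two inequalities.

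The heart of the matter is the following description of $P$. Write $g(w) := \|w\|_H^{p-1}\,\sgn w$, so that $g(w) = \|w\|_H^{p-2} w$ pointwise (extended by $0$ where $w = 0$), $\|g(w)\|_{L_{p'}(\Omega,H)} = \|w\|_{L_p(\Omega,H)}^{p-1}$, and $g(sw) = s^{p-1} g(w)$ for $s > 0$. I would prove that (a) if $\|u\|_{L_p(\Omega,H)} \le 1$ then $Pu = u$; and (b) if $\|u\|_{L_p(\Omega,H)} > 1$ then $Pu$ is the unique $w$ with $\|w\|_{L_p(\Omega,H)} = 1$ and $u - w = c\,g(w)$ for some $c > 0$. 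The key computation uses the variational characterisation of $P$ and H\"older's inequality: for $w$ on the sphere one has $\langle g(w), w\rangle = \|w\|_{L_p(\Omega,H)}^p = 1$ and $\|g(w)\|_{L_{p'}(\Omega,H)} = 1$, whence $\RRe\langle g(w), v - w\rangle \le \|v\|_{L_p(\Omega,H)} - 1 \le 0$ for every $v \in C$; thus $u = w + c\,g(w)$ satisfies $\RRe\langle u - w, v - w\rangle \le 0$ for all $v \in C$, which is exactly $Pu = w$. Uniqueness follows from the strict convexity of $L_p(\Omega,H)$ for $1 < p < \infty$.

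With this formula the equivalence of the two inequalities is immediate. For \ref{tpacc101-2}${}\Rightarrow{}$Ouhabaz, take $u \in \cv$: if $\|u\|_{L_p(\Omega,H)} \le 1$ the inequality is trivial, while if $\|u\|_{L_p(\Omega,H)} > 1$ then $w := Pu \in \cv$ and $u - w = c\,g(w) \in \cv$ with $c > 0$ forces $g(w) \in \cv$, so $\RRe\gota(Pu, u - Pu) = c\,\RRe\gota(w, g(w)) \ge 0$. For the converse, given $w \in \cv$ with $g(w) \in \cv$, I may assume $\|w\|_{L_p(\Omega,H)} = 1$ since $\RRe\gota(sw, g(sw)) = s^p\,\RRe\gota(w, g(w))$ and $g(sw) \in \cv \Leftrightarrow g(w) \in \cv$; then $u := w + g(w) \in \cv$ has $Pu = w$ and $u - Pu = g(w)$, so the Ouhabaz inequality gives $\RRe\gota(w, g(w)) \ge 0$. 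The main obstacle is step (b), the Lagrange-type identification $u - Pu = c\,g(Pu)$ with $c \ge 0$ — precisely Nittka's structural contribution. The delicate points there are deriving the multiplier relation from the variational inequality via the G\^ateaux differentiability of $u \mapsto \tfrac1p\|u\|_{L_p(\Omega,H)}^p$, confirming that $g(w) \in L_{p'}(\Omega,H)$ makes the relevant pairings meaningful, and handling the null set $\{w = 0\}$ in the definition of $\sgn w$; once (b) is in place, the translation to the form inequality and the scaling reduction are routine.
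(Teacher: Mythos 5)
Your route is essentially the paper's: reduce to Ouhabaz's invariance criterion for the closed convex set $C$, and then identify the projection onto the $L_p$-ball well enough to translate $\RRe\gota(Pu,u-Pu)\geq 0$ into the stated inequality with $\|u\|_H^{p-1}\sgn u$. The easy half of your step (b) (that $w+c\,g(w)$ projects onto $w$) and the final translation, including the scaling reduction, match the paper's argument almost verbatim.

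The one genuine gap is the converse half of (b), which you correctly flag as ``the main obstacle'' but do not prove: you must show that for $u\notin C$ the residual $h=u-Pu$ is \emph{necessarily} a nonnegative multiple of $g(Pu)$ (equivalently, that the normal cone $N(w)$ at $w=Pu$ is exactly the ray through $g(w)$), since the implication from condition (ii) to Ouhabaz's inequality uses precisely this to conclude $g(Pu)\in\cv$ from $u-Pu\in\cv$. This is the entire content of the paper's Proposition~2.4, and two points in your sketch need repair. First, $h$ is a priori only in $L_2(\Omega,H)$; one must show $h\in L_{p'}(\Omega,H)$ before any norming-functional argument makes sense, which the paper does by testing against $v\in L_2\cap L_p$ and using that this intersection is dense in $L_p$ (this is where $\sigma$-finiteness enters). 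Second, the uniqueness you need is not strict convexity of $L_p(\Omega,H)$ but of its dual $L_{p'}(\Omega,H)$ (equivalently, smoothness of $L_p$): the variational inequality only says that $h$, suitably normalised, is \emph{a} norming functional for $w$, and it is the strict convexity of $L_{p'}$ (the paper's Lemma~2.3 combined with Theorem~B.1 applied to the exponent $p'$) that forces it to coincide with $g(w)$. Your appeal to G\^ateaux differentiability of $\tfrac1p\|\cdot\|_p^p$ is the right concept, but for a reflexive space that differentiability is exactly equivalent to strict convexity of the dual, so it is not a shortcut around this step; it is the same step.
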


Our proof in Section~\ref{Spacc2} is slightly different from Nittka's 
since we exploit 
the strict convexity of the space $L_p(\Omega,H)$ for 
all $1 < p < \infty$, which we prove in Appendix~\ref{Spaccapp2}.

Our second aim is to apply the criterion to 
purely second-order Hilbert space valued 
elliptic operators with Neumann boundary conditions. 
They generate a contractive $C_0$-semigroup $(S_t)_{t > 0}$ 
on $L_2(\Omega,H)$.
Of particular interest are systems, that is $H = \Ci^d$.
In Section~\ref{Spacc3} we show that there is an interval $[p_-,p_+]$, with 
$1 < p_- < 2 < p_+ < \infty$, such that the semigroup $S$ 
extends to a contractive $C_0$-semigroup on $L_p(\Omega,H)$
for all $p \in [p_-,p_+]$.
To prove the needed estimates, we use a chain rule
formula, which is quite delicate and will be proved in Appendix~\ref{Spaccapp1}.
In the scalar case our results may be compared with 
Cialdea--Maz'ya \cite{CiaM},
who introduced an algebraic version of $L_p$-dissipativity
and presented an algebraic characterisation for scalar-valued 
elliptic operators.
This algebraic characterisation was refined by 
Carbonaro--Dragi{\v{c}}evi{\'c}
\cite{CarbonaroDragicevic1} and they used the 
result of Nittka to describe contractive $C_0$-semigroups on $L_p(\Omega)$
via the notion that they called $p$-ellipticity.

\section{Nittka's criterion for $L_p$-contractivity} \label{Spacc2}
Let $(\Omega,\cb,\mu)$ be a $\sigma$-finite measure space.
Let $H$ be a Hilbert space.
For all $p \in [1,\infty)$ we write $L_p = L_p(\Omega,H)$.
If $u \in L_p$, then we write $\|u\|_p = \|u\|_{L_p(\Omega,H)}$
and $\|u\|_H \colon \Omega \to \Ri$ is the function from $\Omega$ into $\Ri$ 
such that 
\[
\|u\|_H(x) = \|u(x)\|_H
\]
for all $x \in \Omega$.
Further we write $\ch = L_2 = L_2(\Omega,H)$.
Throughout this paper we fix $p \in (1,\infty)$.
Define 
\[
C = \{ u \in \ch \cap L_p : \|u\|_p \leq 1 \} 
.  \]
Clearly $C$ is convex and it follows from Fatou's lemma that $C$ is 
closed in~$\ch$.
Let $P \colon \ch \to C$ be the orthogonal projection.
For all $u \in \ch$ define 
\[
N(u) 
= \{ h \in \ch : \RRe (h, v-u)_\ch \leq 0 \mbox{ for all } v \in C \} 
.  \]
Then $N(u)$ is a closed cone in~$\ch$ with $0 \in N(u)$.
We state an easy property regarding $C$ and $N(u)$.

\begin{lemma} \label{lpacc201}
For all $f \in \ch$ there exist unique $u \in C$ and $h \in N(u)$
such that $f = u + h$.
Actually, $u = Pf$.
\end{lemma}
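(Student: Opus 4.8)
The plan is to recognize this as a standard characterization of the metric projection onto a closed convex set in a Hilbert space, adapted to the cone language introduced just before the statement. Since $C$ is a nonempty closed convex subset of the Hilbert space $\ch$, the orthogonal projection $P$ onto $C$ exists and is well-defined for every $f \in \ch$; this is the only external fact I would invoke. The core identity to exploit is the variational characterization of $Pf$: for $u \in C$, one has $u = Pf$ if and only if $\RRe(f - u, v - u)_\ch \leq 0$ for all $v \in C$.

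First I would prove existence. Given $f \in \ch$, set $u = Pf \in C$ and $h = f - u$, so trivially $f = u + h$. The goal is to show $h \in N(u)$, i.e.\ $\RRe(h, v - u)_\ch \leq 0$ for all $v \in C$. This is exactly the variational inequality characterizing the projection onto the convex set $C$, so it holds by the defining property of $P$. Thus the decomposition $f = u + h$ with $u \in C$ and $h \in N(u)$ exists, and moreover $u = Pf$ in this construction.

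Next I would prove uniqueness, which will simultaneously establish that any valid decomposition must have $u = Pf$. Suppose $f = u + h$ with $u \in C$ and $h \in N(u)$. By definition of $N(u)$, this means $\RRe(h, v - u)_\ch \leq 0$ for all $v \in C$, that is $\RRe(f - u, v - u)_\ch \leq 0$ for all $v \in C$. But this is precisely the variational inequality that characterizes $u$ as the orthogonal projection of $f$ onto $C$; since the projection onto a closed convex set is unique, we conclude $u = Pf$, and then $h = f - u = f - Pf$ is determined as well. This closes both existence and uniqueness at once.

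The argument is short because it reduces entirely to the variational characterization of the metric projection; the main point is simply to unwind the definition of the cone $N(u)$ and observe that membership $h \in N(u)$ with $f = u + h$ is \emph{equivalent} to $u$ being the projection $Pf$. There is no genuine obstacle here: the only substantive input is the standard Hilbert space projection theorem (existence, uniqueness, and the characterizing variational inequality for a nonempty closed convex set), together with the fact—already recorded in the excerpt—that $C$ is convex and closed in $\ch$. I would present the equivalence in one clean display of the inequality and cite the projection theorem for the conclusion.
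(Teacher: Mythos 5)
Your proposal is correct and follows essentially the same route as the paper: existence is obtained identically from the variational inequality for $Pf$, and for uniqueness you invoke the characterization ``$u=Pf$ iff $\RRe(f-u,v-u)_\ch\le 0$ for all $v\in C$'' as a known theorem, whereas the paper simply inlines the standard two-line proof of that uniqueness (plugging each candidate into the other's inequality and adding to get $\|h-\tilde h\|_\ch^2\le 0$). No gap; the only difference is whether the uniqueness half of the projection theorem is cited or re-derived.
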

\begin{proof}
Let $u = Pf$ and $h = f - Pf$.
Then $u \in C$ and 
$\RRe (h, v-u)_\ch = \RRe (f - Pf, \penalty-10 v - Pf)_\ch \leq 0$
for all $v \in C$.
This proves existence.
Let also $\tilde u \in C$ and $\tilde h \in N(\tilde u)$ be such that 
$f = \tilde u + \tilde h$.
Then $\RRe(h, \tilde u - u)_\ch \leq 0$ and similarly
$\RRe (\tilde h, u - \tilde u)_\ch \leq 0$.
Hence $\RRe (h - \tilde h, \tilde u - u)_\ch \leq 0$.
Since $h - \tilde h = \tilde u - u$ this implies that 
$\RRe \|h - \tilde h\|_\ch^2 \leq 0$ and the statement follows.
\end{proof}

If $u \colon \Omega \to H$ is a function, then define
$\sgn u \colon \Omega \to H$ by 
\[
(\sgn u)(x)
= \left\{ \begin{array}{ll}
   \frac{1}{\|u(x)\|_H} \, u(x) & \mbox{if } u(x) \neq 0,  \\[5pt]
   0 & \mbox{if } u(x) = 0 .
          \end{array} \right.
\]
Note that $\sgn u = \lim_{\varepsilon \downarrow 0} u_\varepsilon$
pointwise, where the function $u_\varepsilon \colon \Omega \to H$ is defined by 
$u_\varepsilon(x) = \frac{1}{\sqrt{\|u(x)\|_H^2 + \varepsilon}} \, u(x)$.
Hence $\sgn u$ is (Bochner) measurable if $u$ is (Bochner) measurable.
Then also $\|u\|_H^{p-1} \, \sgn u$ is (Bochner) measurable 
whenever $u$ is (Bochner) measurable.

For a description of $N(u)$ we use that $L_p(\Omega,H)$ is
strictly convex.
See Appendix~\ref{Spaccapp2} for a proof.
Recall that a Banach space $E$ is called {\bf strictly convex}
if for all $\xi,\eta \in E$ with $\|\xi\|_E = 1 = \|\eta\|_E$ and $\xi \neq \eta$
it follows that $\|\xi + \eta\|_E < 2$.

\begin{lemma} \label{lpacc203}
Let $E$ be a Banach space.
Assume that $E^*$ strictly convex
and let $x \in E$.
Then there exists a unique $f \in E^*$ such that 
$\RRe f(x) = \|x\|_E^2 = \|f\|_{E^*}^2$.
This unique $f$ satisfies $f(x) = \|x\|_E^2$.
\end{lemma}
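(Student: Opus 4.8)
The plan is to obtain existence from the Hahn--Banach theorem and uniqueness from the assumed strict convexity of $E^*$. For existence I would first treat the trivial case $x = 0$, where $f = 0$ works and is clearly forced. For $x \neq 0$, I would invoke the Hahn--Banach theorem to produce a norming functional $g \in E^*$ with $\|g\|_{E^*} = 1$ and $g(x) = \|x\|_E$ (in the complex case this comes from extending the functional $\lambda x \mapsto \lambda \|x\|_E$ defined on $\Ci x$). Scaling, I would set $f = \|x\|_E \, g$, which gives $\|f\|_{E^*} = \|x\|_E$ and $f(x) = \|x\|_E^2$. Since $f(x)$ is then real, this at once verifies $\RRe f(x) = \|x\|_E^2 = \|f\|_{E^*}^2$ together with the final assertion $f(x) = \|x\|_E^2$; the latter therefore requires no separate argument.

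For uniqueness, suppose $f_1, f_2 \in E^*$ both satisfy $\RRe f_j(x) = \|x\|_E^2 = \|f_j\|_{E^*}^2$. If $x = 0$ then both are $0$, so I may assume $x \neq 0$ and put $r = \|x\|_E > 0$, so that $\|f_1\|_{E^*} = \|f_2\|_{E^*} = r$. The decisive step is to show that the midpoint $\tfrac12(f_1 + f_2)$ has the largest norm allowed by the triangle inequality. On the one hand, the real-part hypothesis gives
\[
\RRe\Big(\big(\tfrac12(f_1 + f_2)\big)(x)\Big) = \tfrac12\big(\RRe f_1(x) + \RRe f_2(x)\big) = r^2,
\]
so that $\big|\big(\tfrac12(f_1+f_2)\big)(x)\big| \ge r^2$. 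On the other hand this modulus is bounded by $\big\|\tfrac12(f_1+f_2)\big\|_{E^*}\,\|x\|_E \le r\cdot r = r^2$. Hence $\big\|\tfrac12(f_1+f_2)\big\|_{E^*} = r$, that is $\|f_1 + f_2\|_{E^*} = 2r = \|f_1\|_{E^*} + \|f_2\|_{E^*}$.

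It then remains to normalise and invoke strict convexity. Setting $\xi = r^{-1} f_1$ and $\eta = r^{-1} f_2$ gives $\|\xi\|_{E^*} = \|\eta\|_{E^*} = 1$ and $\|\xi + \eta\|_{E^*} = 2$, so by the definition of strict convexity of $E^*$ one has $\xi = \eta$, whence $f_1 = f_2$. The crux of the argument, and the only place where strict convexity is actually used, is the averaging identity of the preceding step: the real-part hypothesis together with the bound $|f(x)| \le \|f\|_{E^*}\,\|x\|_E$ forces the norm of the midpoint to its maximal value $r$, and strict convexity then upgrades this norm equality to an equality of functionals. Beyond being attentive to the complex scalars when producing the norming functional, I anticipate no real difficulty.
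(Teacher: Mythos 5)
Your proposal is correct and follows essentially the same route as the paper: existence via a Hahn--Banach norming functional, and uniqueness by observing that the averaged functional $\tfrac12(f_1+f_2)$ attains $\RRe$-value $\|x\|_E^2$ at $x$ while its norm is at most $\|x\|_E$, which strict convexity of $E^*$ then turns into $f_1=f_2$ (the paper phrases this as a proof by contradiction, but the computation is identical). Your explicit handling of $x=0$ and of the final assertion $f(x)=\|x\|_E^2$ is a harmless elaboration of what the paper leaves implicit.
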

\begin{proof}
The existence of an $f \in E^*$ such that $f(x) = \|x\|_E^2 = \|f\|_{E^*}^2$
is a well-known consequence of the Hahn--Banach theorem.
For the uniqueness, we may assume without loss of generality that 
$\|x\|_E = 1$.
Suppose also $g \in E^*$ with $\RRe g(x) = \|x\|_E^2 = \|g\|_{E^*}^2$
and $g \neq f$.
Define $h = \frac{1}{2} \, (f + g)$.
Then $\|h\|_{E^*} < 1$ by the strict convexity of $E^*$.
But then 
\[
1 
= \tfrac{1}{2} \, \RRe (f(x) + g(x))
= \RRe h(x)
\leq \|h\|_{E^*} \, \|x\|_E
< 1
.  \]
This is a contradiction.
\end{proof}

Now we are able to give a characterisation for $N(u)$.

\begin{prop} \label{ppacc204}
Let $u \in C$.
Then the following are equivalent.
\begin{tabeleq}
\item \label{ppacc204-1}
$N(u) \neq \{ 0 \} $.
\item \label{ppacc204-2}
$\|u\|_p = 1$ and $\|u\|_H^{p-1} \, \sgn u \in \ch$.
\end{tabeleq}
If these conditions are valid, then 
$N(u) = \{ t \, \|u\|_H^{p-1} \, \sgn u : t \in [0,\infty) \} $.
\end{prop}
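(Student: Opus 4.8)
The plan is to split the equivalence into the easy implication \ref{ppacc204-2}$\Rightarrow$\ref{ppacc204-1} (which also yields the inclusion ``$\supseteq$'' in the final formula) and the harder converse, handling the delicate uniqueness through Lemma~\ref{lpacc203} rather than a direct pointwise analysis of the equality case in H\"older's inequality. Throughout write $g = \|u\|_H^{p-1}\,\sgn u$ and let $p' = \frac{p}{p-1}$, so that $L_{p'} = L_{p'}(\Omega,H)$ is the dual of $L_p$ (here we use that $H$ is reflexive) with pairing $\langle f,v\rangle = \int_\Omega (f,v)_H \, d\mu$, which agrees with $(\cdot,\cdot)_\ch$ on $\ch \cap L_p$. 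A short computation gives $\|g\|_{p'} = \|u\|_p^{p-1}$ and $\langle g,u\rangle = \|u\|_p^p$. Hence, if \ref{ppacc204-2} holds, then $\|u\|_p = 1$ forces $\|g\|_{p'} = 1$ and $\langle g,u\rangle = 1$, and assuming in addition $g \in \ch$ one checks for every $v \in C$ and $t \geq 0$ that $\RRe(tg, v - u)_\ch = t\,(\RRe\langle g,v\rangle - 1) \leq t\,(\|v\|_p - 1) \leq 0$, so $tg \in N(u)$. Since $g \neq 0$ this proves \ref{ppacc204-1} and the inclusion $\{t\,g : t \in [0,\infty)\} \subseteq N(u)$.

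For the converse, fix $h \in N(u)$ with $h \neq 0$; I must show $\|u\|_p = 1$, that $g \in \ch$, and $h = \|h\|_{p'}\,g$. First I would rule out $\|u\|_p < 1$: in that case, for every $w \in \ch \cap L_p$ and all small $\varepsilon > 0$ both $u \pm \varepsilon w$ lie in $C$, so $h \in N(u)$ gives $\RRe(h,w)_\ch = 0$; as $\ch \cap L_p$ is dense in $\ch$ (here $\sigma$-finiteness enters, via simple functions), this forces $h = 0$, a contradiction. Hence $\|u\|_p = 1$. Next, the defining inequality for $N(u)$ says that $v \mapsto \RRe(h,v)_\ch$ is bounded above by $M := \RRe(h,u)_\ch < \infty$ on $C$; replacing $v$ by $e^{i\theta} v \in C$ shows $|(h,v)_\ch| \leq M$ whenever $v \in \ch \cap L_p$ with $\|v\|_p \leq 1$. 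Thus $v \mapsto (h,v)_\ch$ is $L_p$-bounded on the dense subspace $\ch \cap L_p$, so it is represented by an element of $L_{p'}$; testing against simple functions identifies this element with $h$, whence $h \in L_{p'}$ and $\|h\|_{p'} \leq M$. Combined with $M = \RRe(h,u)_\ch \leq \|h\|_{p'}\,\|u\|_p = \|h\|_{p'}$ this gives equality, so $\RRe(h,u)_\ch = \|h\|_{p'}$.

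It remains to pin down $h$. Since $L_{p'}$ is strictly convex (Appendix~\ref{Spaccapp2}), Lemma~\ref{lpacc203} applies to $E = L_p$ and $x = u$: there is a unique norming functional $f \in L_{p'}$ with $\RRe f(u) = 1 = \|f\|_{p'}^2$. Both $g$ and $\hat h := h/\|h\|_{p'}$ qualify, the former by the computation above and the latter because $\|\hat h\|_{p'} = 1$ and $\RRe\langle \hat h,u\rangle = \RRe(h,u)_\ch/\|h\|_{p'} = 1$. Uniqueness therefore gives $\hat h = g$, that is $h = \|h\|_{p'}\,g$; in particular $g = h/\|h\|_{p'} \in \ch$, which completes \ref{ppacc204-2}, and every nonzero $h \in N(u)$ is a positive multiple of $g$, giving the reverse inclusion and the formula for $N(u)$. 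I expect the main obstacle to be the middle paragraph, namely showing that membership in the normal cone forces the a priori only $L_2$ function $h$ to lie in $L_{p'}$ and to norm $u$, since this is exactly where the interplay between the $\ch$-geometry defining $N(u)$ and the $L_p$-geometry defining $C$ must be controlled; once $h$ is recognised as a norming functional, strict convexity of $L_{p'}$ via Lemma~\ref{lpacc203} does the rest cleanly.
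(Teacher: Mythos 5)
Your proposal is correct and follows essentially the same route as the paper: the easy direction via H\"older's inequality, ruling out $\|u\|_p<1$ by density of $\ch\cap L_p$, upgrading a nonzero $h\in N(u)$ to a norming functional in $L_{p'}$, and then identifying it with $\|u\|_H^{p-1}\,\sgn u$ through the uniqueness statement of Lemma~\ref{lpacc203} and the strict convexity of $L_{p'}(\Omega,H)$. The only cosmetic difference is that you normalise $h$ by $\|h\|_{p'}$ whereas the paper normalises by $\RRe(h,u)_\ch$, and you spell out the rotation $v\mapsto e^{i\theta}v$ that the paper leaves implicit.
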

\begin{proof}
`\ref{ppacc204-2}$\Rightarrow$\ref{ppacc204-1}'.
Let $v \in C$.
Then the Cauchy--Schwarz inequality and the H\"older inequality give
\begin{eqnarray*}
\RRe (\|u\|_H^{p-1} \, \sgn u, v)_\ch
& \leq & \int_\Omega \|u\|_H^{p-1} \, \|v\|_H  \\
& \leq & \Big( \int_\Omega \|u\|_H^{(p-1) p'} \Big)^{1/p'} \, \|v\|_p  \\
& \leq & \|u\|_p^{p/p'}
= 1
= \RRe (\|u\|_H^{p-1} \, \sgn u, u)_\ch
.
\end{eqnarray*}
So $\|u\|_H^{p-1} \, \sgn u \in N(u)$
and $ \{ t \, \|u\|_H^{p-1} \, \sgn u : t \in [0,\infty) \} \subset N(u)$.
In particular, $N(u) \neq \{ 0 \} $.

`\ref{ppacc204-1}$\Rightarrow$\ref{ppacc204-2}'.
We first show that $\|u\|_p = 1$.
Suppose that $\|u\|_p < 1$.
Let $h \in N(u)$.
Let $w \in L_2 \cap L_p$ with $\|w\|_p \leq 1 - \|u\|_p$.
Then $u + w \in C$.
Since $h \in N(u)$ one deduces that $\RRe (h,w)_\ch \leq 0$.
Because $L_2 \cap L_p$ is dense in $\ch$ it follows that $h = 0$.
Hence \ref{ppacc204-2} implies that $\|u\|_p = 1$.

Next let $h \in N(u)$ with $h \neq 0$.
If $v \in L_2(\Omega,H) \cap L_p(\Omega,H)$, then 
$\RRe (h,v)_\ch \leq \|v\|_p \, \RRe (h,u)_\ch$.
So $h \in L_{p'}(\Omega,H)$ and $\|h\|_{p'} \leq \RRe (h,u)_\ch$.
If $\RRe (h,u)_\ch = 0$, then $\|h\|_{p'} = 0$ and $h = 0$, which is 
a contradiction.
So $\RRe (h,u)_\ch \neq 0$.
Multiplying $h$ with a strictly positive constant we may assume 
that $\RRe (h,u)_\ch = 1$.
Then 
\[
\|h\|_{p'}
\leq 1
= \RRe (h,u)_\ch
\leq \|h\|_{p'} \, \|u\|_p
\leq \|h\|_{p'}
.  \]
So $\|h\|_{p'} = 1 = \RRe (h,u)_\ch = \RRe \langle h,u \rangle_{L_{p'} \times L_p}$.
We proved that 
\[
\RRe \langle h,u \rangle_{L_{p'} \times L_p}
= \|h\|_{p'}^2 
= \|u\|_p^2
.  \]
On the other hand, 
\[
\int_\Omega \Big\| \|u\|_H^{p-1} \, \sgn u \Big\|_H^{p'}
= \int_\Omega \|u\|_H^{(p-1) p'}
= \int_\Omega \|u\|_H^p
= 1
, \]
so $\|u\|_H^{p-1} \, \sgn u \in L_{p'}$ and furthermore
\[
\RRe \langle \|u\|_H^{p-1} \, \sgn u,u \rangle_{L_{p'} \times L_p}
= \int_\Omega \|u\|_H^p
= 1
= \Big\| \|u\|_H^{p-1} \, \sgn u \Big\|_{p'}^2 
= \|u\|_p^2
.  \]
By Lemma~\ref{lpacc203} we obtain that 
$\|u\|_H^{p-1} \, \sgn u = h \in \ch$.
Moreover, $N(u) \subset \{ t \, \|u\|_H^{p-1} \, \sgn u : t \in [0,\infty) \} $.
\end{proof}

Let $\cv$ be a Hilbert space that is continuously and densely embedded in 
$\ch = L_2(\Omega,H)$.
Let $\gota \colon \cv \times \cv \to \Ci$ be a sesquilinear form such 
that $\gota$ is {\bf continuous}, that is there is an $M > 0$ such that 
\[
|\gota(u,v)|
\leq M \, \|u\|_\cv \, \|v\|_\cv
\]
for all $u,v \in \cv$, and $\gota$ is {\bf elliptic}, that is 
there are $\mu > 0$ and $\omega \in \Ri$ such that 
\begin{equation}
\RRe \gota(u,u) + \omega \, ||u\|_\ch^2
\geq \mu \, \|u\|_\cv^2
\label{eSpacc2;3}
\end{equation}
for all $u \in \cv$.
Then there is a unique operator $A$ in $\ch$ whose graph is 
\[
\graph(A)
= \{ (u,f) : u \in \cv, \; f \in \ch \mbox{ and }
   \gota(u,v) = (f,v)_\ch \mbox{ for all } v \in \cv \} 
.  \]
We call $A$ the {\bf operator associated with the form $\gota$}.
Then $-A$ generates a holomorphic $C_0$-semigroup $(S_t)_{t > 0}$
in $\ch$ satisfying $\|S_t\|_{\ch \to \ch} \leq e^{\omega t}$
for all $t > 0$, where $\omega$ is as in (\ref{eSpacc2;3}).

\begin{proof}[{\bf Proof of Theorem~\ref{tpacc101}.}]
`\ref{tpacc101-1}$\Rightarrow$\ref{tpacc101-2}'.
It follows from \cite{Ouh5} Theorem~2.2 1)$\Rightarrow$2) that 
$P \cv \subset \cv$ and $\RRe \gota(Pf, f - Pf) \geq 0$ for all $f \in \cv$.
Since $\|u\|_H^{p-1} \, \sgn u \in L_2(\Omega,H)$ by assumption,
one deduces that 
\[
\int_\Omega \|u\|_H^p 
= \int_\Omega ( \|u\|_H^{p-1} \, \sgn u, \|u\|_H \, \sgn u)_H
< \infty
.  \]
So $u \in L_p$.
Without loss of generality we may assume that $\|u\|_p = 1$.
Then $u \in C$ and $\|u\|_H^{p-1} \, \sgn u \in \ch$, so 
$\|u\|_H^{p-1} \, \sgn u \in N(u)$ by Proposition~\ref{ppacc204}.
Set $f = u + \|u\|_H^{p-1} \, \sgn u \in \cv \subset \ch$.
Then the uniqueness of Lemma~\ref{lpacc201} gives $u = Pf$.
Consequently
$\RRe \gota(u, \|u\|_H^{p-1} \, \sgn u) = \RRe \gota(Pf, f - Pf) \geq 0$.

`\ref{tpacc101-2}$\Rightarrow$\ref{tpacc101-1}'.
Let $v \in \cv$.
We shall show that $\RRe \gota(Pv, v - Pv) \geq 0$.
If $v \in C$, then this is trivial, so we may assume that $v \not\in C$.
Set $u = Pv$ and $h = v - u$.
Then $h \in N(u)$ by Lemma~\ref{lpacc201}.
Also $h \neq 0$, so Proposition~\ref{ppacc204} implies that 
$\|u\|_p = 1$ and 
$\|u\|_H^{p-1} \, \sgn u \in \ch$.
Moreover, there exists a 
$t \in [0,\infty)$ such that $h = t \, \|u\|_H^{p-1} \, \sgn u$.
Then $t \neq 0$.
Since $u = Pv \in P \cv \subset \cv$ by assumption,
one deduces that $\|u\|_H^{p-1} \, \sgn u = \frac{1}{t} (v-u) \in \cv$.
Hence 
$\RRe \gota(Pv, v - Pv) = t \RRe \gota(u, \|u\|_H^{p-1} \, \sgn u) \geq 0$.
Now it follows from \cite{Ouh5} Theorem~2.2 2)$\Rightarrow$1) that 
$S_t C \subset C$ for all $t > 0$.
This obviously implies Statement~\ref{tpacc101-1}.
\end{proof}

\section{Application to Hilbert space valued parabolic problems} \label{Spacc3}

Let $\Omega \subset \Ri^d$ be an open set.
Let $H$ be a separable Hilbert space.
For all $k,l \in \{ 1,\ldots,d \} $ let 
$c_{kl} \colon \Omega \to \cl(H)$ be a bounded function
such that $x \mapsto (c_{kl}(x) \, \xi, \eta)_H$ is 
measurable from $\Omega$ into $\Ci$ for all $\xi,\eta \in H$.
Let $\mu > 0$.
We assume that 
\[
\RRe \sum_{k,l=1}^d (c_{kl}(x) \, \xi_l, \xi_k)_H
\geq \mu \sum_{k=1}^d \|\xi_k\|_H^2
\]
for all $x \in \Omega$ and $\xi_1,\ldots,\xi_d \in H$.
Further let $M > 0$ be such that 
\[
\sum_{k=1}^d \Big\| \sum_{l=1}^d c_{kl}(x) \, \xi_l \Big\|_H^2
\leq M^2 \, \sum_{k=1}^d \|\xi_k\|_H^2
\]
for all $x \in \Omega$ and $\xi_1,\ldots,\xi_d \in H$.
For simplicity we consider Neumann boundary conditions.
Define $\cv = H^1(\Omega,H)$ and $\gota \colon \cv \times \cv \to \Ci$ by
\[
\gota(u,v)
= \sum_{k,l=1}^d \int_\Omega (c_{kl}(x) \, (\partial_l u)(x), (\partial_k v)(x) )_H \, dx
.  \]
Then $\gota$ is a continuous elliptic sesquilinear form.
Let $A$ be the operator associated with $\gota$ and let $S$ be the semigroup
generated by $-A$.

\begin{thm} \label{tpacc301}
Let $p \in (1,\infty)$ and suppose that 
\[
\frac{\mu}{M}
\geq 2 \Big| \frac{p-2}{p} \Big| + \Big| \frac{p-2}{p} \Big|^2
.  \]
Then $S$ extends consistently to a contraction semigroup in $L_p(\Omega,H)$.
\end{thm}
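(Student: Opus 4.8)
The plan is to verify the two requirements in part~\ref{tpacc101-2} of Theorem~\ref{tpacc101} for the present form $\gota$ and then invoke the implication \ref{tpacc101-2}$\Rightarrow$\ref{tpacc101-1}. Since $\RRe\gota(u,u)\geq\mu\sum_k\|\partial_k u\|_2^2\geq0$, the semigroup $S$ is already $L_2$-contractive, so it suffices to establish the bound on $\ch\cap L_p$ and then extend, by density of $\ch\cap L_p$ in $L_p$ and consistency with $S$ on $L_2$, to a contraction $C_0$-semigroup on $L_p(\Omega,H)$. Moreover the hypothesis $\mu/M\geq 2\,|\tfrac{p-2}{p}|+|\tfrac{p-2}{p}|^2$ is invariant under $p\mapsto p'$, and the dual semigroup is associated with the adjoint form, which has the same constants $\mu$ and $M$; hence I may assume $p\geq2$, the range $1<p<2$ following by duality.

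First I would check $P\cv\subset\cv$. By Lemma~\ref{lpacc201} and Proposition~\ref{ppacc204}, every $f\in\ch$ with $\|f\|_p>1$ satisfies $f=Pf+t\,\|Pf\|_H^{p-1}\sgn(Pf)$ for a single $t>0$; pointwise this forces $f(x)$ and $(Pf)(x)$ to be nonnegative multiples of one another, so that $Pf=(\rho\circ\|f\|_H)\,\sgn f$, where $\rho$ is the inverse of the strictly increasing function $s\mapsto s+t\,s^{p-1}$ on $[0,\infty)$. As $\rho$ is $1$-Lipschitz with $\rho(0)=0$, the map $\xi\mapsto\rho(\|\xi\|_H)\,\sgn\xi$ is a Lipschitz self-map of $H$ fixing $0$, and the chain rule of Appendix~\ref{Spaccapp1} for the composition of an $H^1(\Omega,H)$-function with such a map gives $Pf\in H^1(\Omega,H)=\cv$.

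Next I would prove $\RRe\gota(u,\|u\|_H^{p-1}\sgn u)\geq0$ for every $u\in\cv$ with $\|u\|_H^{p-1}\sgn u\in\cv$. On $\{u\neq0\}$ one has $\|u\|_H^{p-1}\sgn u=\|u\|_H^{p-2}u$, while $\partial_k u=0$ a.e.\ on $\{u=0\}$; the chain rule of Appendix~\ref{Spaccapp1} then gives $\partial_k(\|u\|_H^{p-2}u)=\|u\|_H^{p-2}\partial_k u+(p-2)\|u\|_H^{p-4}\RRe(u,\partial_k u)_H\,u$. Substituting this into the definition of $\gota$, writing $w_k=\sum_l c_{kl}\,\partial_l u$ and $v=\sgn u$, and taking real parts, a direct computation gives
\[
\RRe\gota(u,\|u\|_H^{p-1}\sgn u)=\int_{\{u\neq0\}}\|u\|_H^{p-2}\,Q\,dx ,
\]
where
\[
Q=\RRe\sum_{k,l}(c_{kl}\,\partial_l u,\partial_k u)_H+(p-2)\sum_k\RRe(v,\partial_k u)_H\,\RRe(w_k,v)_H .
\]
Because $\|u\|_H^{p-2}>0$ on $\{u\neq0\}$, it remains to prove the pointwise inequality $Q\geq0$ for an arbitrary unit vector $v\in H$ and arbitrary $\partial_k u\in H$.

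This pointwise estimate is the crux. I would fix $v$, decompose $\partial_k u=\alpha_k v+\zeta_k$ with $\alpha_k=\RRe(v,\partial_k u)_H\in\Ri$ and $\zeta_k\perp v$, and set $r^2=\sum_k\alpha_k^2$, $s^2=\sum_k\|\zeta_k\|_H^2$. Since $\sum_k\alpha_k\RRe(w_k,v)_H=\RRe\sum_{k,l}(c_{kl}\,\partial_l u,\alpha_k v)_H$, ellipticity supplies $\RRe\sum_{k,l}(c_{kl}\,\partial_l u,\partial_k u)_H\geq\mu(r^2+s^2)$ and $\RRe\sum_{k,l}(c_{kl}\,\alpha_l v,\alpha_k v)_H\geq\mu r^2$, whereas boundedness bounds the remaining cross term $\RRe\sum_{k,l}(c_{kl}\,\zeta_l,\alpha_k v)_H$ in modulus by $M r s$; collecting these with the factor $(p-2)\geq0$ reduces $Q\geq0$ to the nonnegativity of the real quadratic form $(p-1)\mu\,r^2-(p-2)M\,rs+\mu\,s^2$ in $(r,s)$. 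Its nonnegativity is a discriminant condition of the form $\mu/M\geq g(p)$, and the stated hypothesis is arranged to guarantee it. The main obstacle is exactly this completion-of-squares step, hand in hand with the rigorous justification of the chain rule of Appendix~\ref{Spaccapp1} in the delicate range $1<p<2$, where $\xi\mapsto\|\xi\|_H^{p-2}\xi$ is only Hölder continuous near the origin. Once $Q\geq0$ is secured, Theorem~\ref{tpacc101} delivers $\|S_t u\|_p\leq\|u\|_p$ on $\ch\cap L_p$, and hence the asserted contractive extension of $S$ to $L_p(\Omega,H)$.
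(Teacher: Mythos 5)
Your overall architecture is the paper's: reduce to $p\ge 2$ by duality, verify condition \ref{tpacc101-2} of Theorem~\ref{tpacc101}, and reduce the form inequality to a pointwise algebraic estimate via a chain rule. Your pointwise estimate, however, is genuinely different. You decompose $\partial_k u=\alpha_k v+\zeta_k$ orthogonally relative to $v=\sgn u$ and arrive at the quadratic form $(p-1)\mu r^2-(p-2)Mrs+\mu s^2$, i.e.\ the discriminant condition $\mu/M\ge \tfrac{p-2}{2\sqrt{p-1}}$. The paper instead substitutes the half powers $v_n=(\|u\|_H^{(p-2)/2}\wedge n)\,u$, which turns the integrand into $\RRe\bigl(c_{kl}(\partial_l v_n-\tau(\partial_l\|v_n\|_H)\sgn v_n),\,\partial_k v_n+\tau(\partial_k\|v_n\|_H)\sgn v_n\bigr)$ with $\tau=\tfrac{p-2}{p}$, yielding the constant $\mu-2M\tau-M\tau^2$ of the hypothesis. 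Your condition is weaker (your estimate sharper) throughout the range where the hypothesis can hold at all; but since $2|\tau|+|\tau|^2\ge\tfrac{|p-2|}{2\sqrt{p-1}}$ fails for large $p$, your claim that the stated hypothesis "is arranged to guarantee" your discriminant condition needs the extra observation $\mu\le M$, which forces $p\le 2+\sqrt2$ and makes the implication valid there. Your verification of $P\cv\subset\cv$ via the radial profile $\rho$ is welcome (the paper's proof is silent on this condition), though the Lipschitz superposition theorem you invoke is not Proposition~\ref{ppaccapp101} but rather the results of \cite{ArendtKreuter}.

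The genuine gap is the chain rule for the untruncated power. You apply "the chain rule of Appendix~\ref{Spaccapp1}" to $u\mapsto\|u\|_H^{p-2}u$, but Proposition~\ref{ppaccapp101} only covers the truncated maps $(\|u\|_H^\alpha\wedge M)\,u$; for $p>2$ the map $\xi\mapsto\|\xi\|_H^{p-2}\xi$ is not globally Lipschitz and there is no a priori reason that $\|u\|_H^{p-2}\,\partial_k u\in L_2$, so the identity $\partial_k(\|u\|_H^{p-2}u)=\|u\|_H^{p-2}\bigl(\partial_k u+(p-2)(\partial_k\|u\|_H)\sgn u\bigr)$ is not immediate even under the standing hypothesis $\|u\|_H^{p-2}u\in\cv$. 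Establishing it is where most of the paper's proof is spent: one works with $w_n=(\|u\|_H^{p-2}\wedge n^2)\,u$, dominates $\|\partial_k w_n\|_H$ uniformly in $n$ by a fixed function in $L_q(\Omega)$ for a suitable $q\in(1,2)$ (using that $\|u\|_H^{p-2}\in L_r(\Omega)$ with $r=2\tfrac{p-1}{p-2}$, a consequence of $\|u\|_H^{p-1}\in L_2(\Omega)$), and passes to the limit against test functions. Your parenthetical worry about the range $1<p<2$ is misplaced: by your own duality reduction only $p\ge 2$ occurs, and the delicate point there is the unboundedness of $\|u\|_H^{p-2}$, not the behaviour near $u=0$. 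Without this truncation-and-limit step the argument is incomplete.
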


Note that the condition in Theorem~\ref{tpacc301} is invariant 
by taking the dual exponent, that is, if $p \in (1,\infty)$ 
satisfies the condition, then so does $q$, where $\frac{1}{p} + \frac{1}{q} = 1$.
If one is satisfied with some small interval, $1 < p_- < 2 < p_+ < \infty$
such that $S$ is $L_p$-contractive for all $p \in [p_-,p^+]$, 
then a significantly easier and less technical proof than the following 
can be given.

\begin{proof}
Using duality, without loss of generality we may assume that $p > 2$.
We argue as in \cite{CiaM}, \cite{ELSV} and \cite{Egert2}.
Let $u \in H^1(\Omega,H)$.
For all $n \in \Ni$ define 
\[
v_n = (\|u\|_H^{\frac{p-2}{2}} \wedge n) \, u
\quad , \quad
w_n = (\|u\|_H^{p-2} \wedge n^2) \, u
\quad \mbox{and} \quad
\chi_n = \one_{[\|u\|_H^{p-2} < n^2]}
.  \]
If follows from Proposition~\ref{ppaccapp101} (cf.\ \cite{ArendtKreuter} Theorems~3.3 and 4.2)
that $v_n,w_n \in H^1(\Omega,H)$
with 
\begin{eqnarray}
\nabla v_n 
& = & n \, (\one - \chi_n) \, \nabla u 
    + \chi_n \, \|u\|_H^{\frac{p-2}{2}} 
         \Big( \nabla u + \tfrac{p-2}{2} \, (\nabla \|u\|_H) \, \sgn u \Big) , \mbox{ and} \nonumber  \\
\nabla w_n 
& = & n^2 \, (\one - \chi_n) \, \nabla u 
    + \chi_n \, \|u\|_H^{p-2}
         \Big( \nabla u + (p-2) \, (\nabla \|u\|_H) \, \sgn u \Big)  \label{etpacc301;1}
.
\end{eqnarray}
Note that $\chi_n \, \|v_n\|_H = \chi_n \, \|u\|_H^{p/2}$.
Hence 
\begin{eqnarray*}
(\one - \chi_n) \, \nabla u 
& = & \frac{1}{n} \, (\one - \chi_n) \, \nabla v_n ,  \\
\chi_n \, \|v_n\|_H^{\frac{p-2}{p}} \, \nabla u
& = & \chi_n \, \Big( \nabla v_n - \tfrac{p-2}{p} \, (\nabla \|v_n\|_H) \, \sgn v_n \Big), \mbox{ and}  \\
\nabla w_n
& = & n \, (\one - \chi_n) \, \nabla v_n
   + \chi_n \, \, \|v_n\|_H^{\frac{p-2}{p}} 
         \Big( \nabla v_n + \tfrac{p-2}{p} \, (\nabla \|v_n\|_H) \, \sgn v_n \Big) .
\end{eqnarray*}
Therefore for almost all $x \in \Omega$ one obtains
\begin{eqnarray*}
\lefteqn{
\sum_{k,l=1}^d \RRe (c_{kl} \, \partial_l u, \partial_k w_n )_H
} \hspace*{3mm} \\*
& = & (\one - \chi_n) \sum_{k,l=1}^d \RRe (c_{kl} \, \partial_l u, n \, \partial_k v_n )_H
\\*
& & \hspace*{10mm} {}
   + \chi_n \sum_{k,l=1}^d \RRe (c_{kl} \, \|v_n\|_H^{\frac{p-2}{p}} \, \partial_l u, 
         \Big( \partial_k  v_n + \tfrac{p-2}{p} \, (\partial_k  \|v_n\|_H) \, \sgn v_n \Big) )_H  \\
& = & (\one - \chi_n) \sum_{k,l=1}^d \RRe (c_{kl} \, \partial_l v_n, \partial_k v_n )_H
\\*
& & \hspace*{1mm} {}
   + \chi_n \sum_{k,l=1}^d \RRe (c_{kl} \, \Big( \partial_l v_n - \tfrac{p-2}{p} \, (\partial_l \|v_n\|_H) \, \sgn v_n \Big) , 
         \Big( \partial_k  v_n + \tfrac{p-2}{p} \, (\partial_k  \|v_n\|_H) \, \sgn v_n \Big) )_H   \\
& = & \sum_{k,l=1}^d \RRe (c_{kl} \, \partial_l v_n, \partial_k v_n )_H
   + \tfrac{p-2}{p} \, \chi_n \, \RRe ( (c_{kl} - c_{lk}^*) \, \partial_l v_n, 
           (\partial_k  \|v_n\|_H) \, \sgn v_n )_H 
\\*
& & \hspace*{10mm} {}
   - \Big( \tfrac{p-2}{p} \Big)^2 \, 
        \RRe (c_{kl} \, (\partial_l \|v_n\|_H) \, \sgn v_n, (\partial_k \|v_n\|_H) \, \sgn v_n )_H  \\
& \geq & \Big( \mu - 2 M \, \tfrac{p-2}{p} - M \, \Big( \tfrac{p-2}{p} \Big)^2 \Big) 
             \sum_{k=1}^d \|\partial_k v_n\|_H^2  
,
\end{eqnarray*}
where we used that 
$\sum_{k=1}^d | \partial_k \|v_n\|_H |^2 \leq \sum_{k=1}^d \|\partial_k v_n\|_H^2$
and $\|\sgn v_n\|_H \leq 1$ by Lemma~\ref{lpaccapp103.7}.
By the assumption on $p$ we obtain that 
\[
\sum_{k,l=1}^d \RRe (c_{kl} \, \partial_l u, \partial_k w_n )_H
\geq 0
\]
almost everywhere.
This is for all $n \in \Ni$.

Next take the limit $n \to \infty$ and use (\ref{etpacc301;1}).
Then 
\begin{equation}
\sum_{k,l=1}^d \RRe (c_{kl} \, \partial_l u, 
                   \|u\|_H^{p-2} \, (\partial_k u + (p-2) \, (\partial_k \|u\|_H) \, \sgn u ))_H
\geq 0
\label{etpacc301;4}
\end{equation}
almost everywhere.

We assume from now on that in addition $\|u\|_H^{p-2} \, u \in \cv$.
Let $k \in \{ 1,\ldots,d \} $.
We shall show that $\partial_k (\|u\|_H^{p-2} \, u) = f_k$ almost everywhere, 
where 
\[
f_k = \|u\|_H^{p-2} \, \Big( \partial_k u + (p-2) \, (\partial_k \|u\|_H) \, \sgn u \Big)
.  \]
Write $r = 2 \, \frac{p-1}{p-2} \in (2,\infty)$ and let $q \in (1,2)$ be such that 
$\frac{1}{2} + \frac{1}{r} = \frac{1}{q}$.
Since $\|u\|_H^{p-1} \in L_2(\Omega)$ one deduces that 
$\int_\Omega (\|u\|_H^{p-2})^r = \int_\Omega (\|u\|_H^{p-1})^2 < \infty$.
So $\|u\|_H^{p-2} \in L_r(\Omega)$.
If $n \in \Ni$, then 
$n^2 \, (\one - \chi_n) \, \|\partial_k u\|_H 
\leq (\one - \chi_n) \, \|u\|_H^{p-2} \, \|\partial_k u\|_H$
and hence (\ref{etpacc301;1}) gives
\begin{equation}
\|\partial_k w_n\|_H 
\leq \|u\|_H^{p-2} \Big( \|\partial_k u\|_H + (p-2) \, \Big| \partial_k \|u\|_H \Big| \Big)
\label{etpacc301;2}
\end{equation}
Note that the right hand side of (\ref{etpacc301;2})
does not depend on $n$ and is an element of $L_q(\Omega)$.
Also $\lim \partial_k w_n = f_k$ almost everywhere.
Hence $\lim \partial_k w_n = f_k$ in $L_q(\Omega,H)$.
It is easy to see that $\lim w_n = \|u\|_H^{p-2} \, u$ in $L_2(\Omega,H)$
since $\|u\|_H^{p-2} \, u \in L_2(\Omega,H)$.
Let $\varphi \in C_c^\infty(\Omega,H)$.
Then 
\begin{eqnarray*}
\int_\Omega (\partial_k (\|u\|_H^{p-2} \, u), \varphi)_H
& = & - \int_\Omega (\|u\|_H^{p-2} \, u, \partial_k \varphi)_H  \\
& = & - \lim_{n \to \infty} \int_\Omega (w_n, \partial_k \varphi)_H
= \lim_{n \to \infty} \int_\Omega (\partial_k w_n, \varphi)_H 
= \int_\Omega (f_k, \varphi)_H
.  
\end{eqnarray*}
So $\partial_k (\|u\|_H^{p-2} \, u) = f_k$ almost everywhere.

Finally (\ref{etpacc301;4}) implies that 
\[
\sum_{k,l=1}^d \RRe (c_{kl} \, \partial_l u,  \partial_k (\|u\|_H^{p-2} \, u))_H
\geq 0
\]
almost everywhere.
Integrating over $\Omega$ gives $\RRe \gota(u,\|u\|_H^{p-2} \, u) \geq 0$.
Now apply Theorem~\ref{tpacc101}.

By the way, with some more work one can show that $\lim_{n \to \infty} \partial_k w_n = f_k$
in $L_2(\Omega,H)$.
\end{proof}

We comment on related results concerning the extension of $S$ to a not 
necessarily contractive $C_0$-semigroup on $L_p$.
Hofmann, Mayboroda and McIntosh~\cite{HMM} showed for $H=\Ci$, $\Omega=\Ri^d$ 
and $d\ge 3$ that the semigroup $S$ can be extended to a $C_0$-semigroup 
on $L_p(\Omega)$ if $p\in[\frac{2d}{d+2},\frac{2d}{d-2}]$. 
Conversely, for each $p\in(1,\frac{2d}{d+2})\cup (\frac{2d}{d-2},\infty)$
they construct an elliptic operator such that the associated semigroup $(S_t)_{t>0}$ 
cannot be extended consistently to a bounded semigroup $L_p(\Ri^d)$.
The extension results are based on off-diagonal Davies--Gaffney estimates;
 cf.~also \cite{Dav10} Theorem~25 and~\cite{Aus2} Section~3.1.

Davies had already pointed out in the introduction of \cite{Dav11} 
that his proof of~\cite{Dav10} Theorem~25 extends to the vector-valued case.
Moreover, by~\cite{Dav11} Theorem~10 for each 
$p\in (1,\frac{2d}{d+2})\cup(\frac{2d}{d-2},\infty)$ there exists an 
elliptic system with $H=\Ci^d$, $\Omega=\Ri^d$, $d\ge 3$ and with
real symmetric coefficients such that the operator $S_t$ 
does not continuously extend to $L_p$ for any $t>0$.

We shall give a corresponding extension result for our setting,
which we obtain readily from standard estimates and tracing 
Auscher's proof of~\cite{Aus2} Proposition~3.2.

\begin{thm} \label{thm:vv-auscher}
Suppose $\Omega=\Ri^d$ or $\Omega\subset\Ri^d$ is open and Lipschitz. 
Then the semigroup $S$ extends to a $C_0$-semigroup with growth bound $0$ 
on $L_p(\Omega,H)$ for all $p\in(\frac{2d}{d+2},\frac{2d}{d-2})$ 
if $d\ge 3$ and for all $p\in(1,\infty)$ if $d\in\{1,2\}$.
\end{thm}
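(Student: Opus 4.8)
The plan is to follow Auscher's scheme for second-order divergence-form operators and to check at each step that the argument is insensitive to the passage from scalar- to $H$-valued functions and to complex non-symmetric coefficients. Recall that $-A$ generates a holomorphic contraction semigroup on $\ch=L_2(\Omega,H)$, so $\|S_t\|_{\ch\to\ch}\le 1$ for all $t>0$; moreover analyticity together with the coercivity bound $\RRe\gota(u,u)\ge\mu\|\nabla u\|_\ch^2$, which holds by the ellipticity hypothesis on the $c_{kl}$, yields the gradient estimate $\sqrt t\,\|\nabla S_t u\|_\ch\le C\,\|u\|_\ch$ for all $u\in\ch$ and $t>0$.

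The first substantial step is to establish the Davies--Gaffney estimates
\[
\|\one_F\,S_t\,\one_E\|_{\ch\to\ch}\le C\,e^{-c\,\rho^2/t},
\qquad \rho={\rm dist}(E,F),
\]
for all disjoint measurable $E,F\subset\Omega$, together with the companion bound for $\sqrt t\,\nabla S_t$. I would prove these by the classical Davies perturbation trick: conjugate $A$ by the multiplication operator $e^{\lambda\phi}$ with $\phi$ bounded and Lipschitz, and control the perturbed form. The crucial point is that the conjugation produces only terms in which $\nabla\phi$ is paired, through the coefficients $c_{kl}$, against gradients; hence only the boundedness constant $M$ and the ellipticity constant $\mu$ enter, and the scalar computation goes through verbatim with $(\,\cdot\,,\,\cdot\,)_H$ replacing products of scalars. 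This is the step I expect to be the main obstacle, not because of any new idea but because one must track the $\lambda$-uniformity carefully and justify the a priori regularity needed to run the perturbation (for instance that $e^{\lambda\phi}S_tu$ lies in the form domain $\cv=H^1(\Omega,H)$).

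Next I would upgrade the $L_2$ off-diagonal bounds to $L_2$--$L_{2^*}$ bounds using the Sobolev embedding $H^1(\Omega,H)\hookrightarrow L_{2^*}(\Omega,H)$ with $2^*=\frac{2d}{d-2}$ for $d\ge 3$ (and $H^1(\Omega,H)\hookrightarrow L_q(\Omega,H)$ for every $q<\infty$ when $d\in\{1,2\}$), which holds on $\Ri^d$ and, via a vector-valued extension operator, on Lipschitz $\Omega$. Combining $\|S_tu\|_{H^1(\Omega,H)}\lesssim t^{-1/2}\|u\|_\ch$ with Sobolev and localising by means of the Gaffney estimates gives
\[
\|\one_F\,S_t\,\one_E\|_{L_2\to L_{2^*}}\le C\,t^{-1/2}\,e^{-c\,\rho^2/t}.
\]
By duality one obtains the symmetric $L_{(2^*)'}\to L_2$ estimate, and composing these through the semigroup law $S_t=S_{t/2}S_{t/2}$ produces $L_p$--$L_q$ off-diagonal estimates for all $(2^*)'\le p\le 2\le q\le 2^*$.

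Finally, taking $p=q$ in the open Sobolev interval, I would sum the resulting $L_p$--$L_p$ off-diagonal estimates over a covering of $\Omega$ by cubes of side $\sqrt t$ using the standard Gaffney/Schur argument; this gives a uniform bound $\|S_t\|_{L_p\to L_p}\le C$ for all $t>0$ and all $p\in(\frac{2d}{d+2},\frac{2d}{d-2})$ (respectively all $p\in(1,\infty)$ for $d\in\{1,2\}$). Uniform boundedness on $L_p$, combined with strong continuity on the dense subspace $L_2\cap L_p$ on which $S$ coincides with the strongly continuous $\ch$-semigroup, yields a $C_0$-semigroup on $L_p(\Omega,H)$ by density. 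The uniform bound forces the growth bound to be $\le 0$, and since $\gota$ carries no coercive lower-order term the bottom of the spectrum of $A$ is $0$, so the growth bound is exactly $0$.
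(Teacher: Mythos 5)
Your proposal follows essentially the same route as the paper, which explicitly traces Auscher's Proposition~3.2: $L_2$--$L_2$ Davies--Gaffney estimates together with the $t^{-1/2}$-smoothing into $\cv$ and the Sobolev embedding give $L_q$--$L_2$ off-diagonal bounds and hence uniform $L_q$-boundedness, followed by duality to cover the full range of $p$. The one refinement worth adopting from the paper is the damping factor $e^{-\omega t}$ for arbitrary $\omega>0$: since the ellipticity estimate (\ref{eSpacc2;3}) controls only the inhomogeneous $H^1$-norm up to the term $\omega\,\|u\|_\ch^2$, the argument yields $\|S_t\|_{L_p\to L_p}\le C_\omega\,e^{\omega t}$, i.e.\ growth bound $0$, rather than the $t$-uniform bound you assert at the end.
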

\begin{proof}
We outline the arguments for $d\ge 3$.
Let $\omega>0$.
Since $\gota$ is elliptic for this choice of $\omega$,
by~\cite{Tan} Lemma~3.6.2\,(3.60) there exists a $c>0$ such that 
$\|e^{-\omega t} \, S_t u\|_\cv \le c \, t^{-1/2} \, \|u\|_2$ for all $u\in L_2(\Omega,H)$
and $t > 0$.
Combining this with the Sobolev embedding $\cv \hookrightarrow L_{2d/(d-2)}$, 
we obtain that there exists a $C>0$ such that
\[
\|e^{-\omega t} \, S_t u\|_{2d/(d-2)} \le C \, t^{-1/2} \, \|u\|_2
\]
for all $u\in L_2(\Omega,H)$ and $t > 0$.
Then by duality
\[
\|e^{-\omega t} \, S_t^* u\|_2 \le C \, t^{-1/2} \, \|u\|_{2d/(d+2)}
\]
for all $u \in L_2 \cap L_{2d/(d+2)}$ and $t > 0$.

Next, it follows from inspection of the proof of~\cite{Aus2} Proposition~3.2
that the parts (2) and (3) of \cite{Aus2} Proposition~3.2
extend to the vector-valued case and 
general open sets $\Omega$, and are applicable to the $C_0$-semigroup 
$(T_t)_{t>0}$ given by $T_t = e^{-\omega t} \, S_t^*$.
For the extension of part~(2) one needs $L_2$--$L_2$ off-diagonal 
estimates that can be obtained, for example, as in \cite{AE2} Theorem~4.2.
Moreover, the vector-valued version of the Riesz--Thorin theorem follows from 
\cite{GLY} Lemma~2.6.
Let $q \in (\frac{2d}{d+2},2)$.
By the extension of \cite{Aus2} Proposition~3.2(2) we obtain that $T$ satisfies 
$L_q$--$L_2$ off-diagonal estimates, 
which implies by the extension of \cite{Aus2} Proposition~3.2(3) 
that $T$ is uniformly bounded in $L_q$.
Dualizing again, we obtain the statement for all $p \in (2,\frac{2d}{d-2})$.
By considering the adjoint form, applying the result for $p>2$ and 
taking the dual we obtain the statement for $p\in(\frac{2d}{d+2},2)$.
\end{proof}

\begin{remark}
We comment on the admissible ranges for $p$ in Theorems~\ref{tpacc301} 
and~\ref{thm:vv-auscher}.
Remarkably, it is possible that the range given in Theorem~\ref{tpacc301} 
for contractive extensions is larger than the one given in 
Theorem~\ref{thm:vv-auscher} for extensions with growth bound $0$.
For example, this occurs if $\frac{\mu}{M} \ge \frac{1}{2}$, say, 
and $d$ is sufficiently large.
\end{remark}

\appendix

\section{The derivative of a truncation} \label{Spaccapp1}

Let $\Omega \subset \Ri^d$ be an open set.
Let $H$ be a Hilbert space.
The principal aim in this section is to prove the following 
chain rule.

\begin{prop} \label{ppaccapp101}
Let $\alpha > 0$ and $M > 0$.
Let $u \in H^1(\Omega,H)$.
Define $v = (\|u\|_H^\alpha \wedge M) \, u$.
Then $v \in H^1(\Omega,H)$ and 
\[
\partial_k v
= \alpha \, \one_{[\|u\|_H^\alpha < M]} \, \|u\|_H^\alpha \, \RRe(\sgn u, \partial_k u) \, \sgn u
   + (\|u\|_H \wedge M) \, \partial_k u
\]
for all $k \in \{ 1,\ldots,d \} $.
\end{prop}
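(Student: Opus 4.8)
The plan is to reduce the assertion to the scalar chain rule and the Leibniz rule by regularising the norm from below. Write $\phi(s) = s^\alpha \wedge M$ for $s \ge 0$, so that $v = \phi(\|u\|_H) \, u$; the virtue of the factorisation $v = g \, u$ with $g = \|u\|_H^\alpha \wedge M$ is that the potential singularity of $g$ near the zero set of $u$ is cancelled by the factor $u$. I would deliberately avoid invoking an abstract vector-valued chain rule for the (globally Lipschitz) map $\xi \mapsto (\|\xi\|_H^\alpha \wedge M) \, \xi$, since in infinite dimensions the a.e.\ differentiability such a statement requires is delicate; instead everything is reduced to scalar composition. The facts I build on are the Leibniz rule, the scalar chain rule, and \cite{ArendtKreuter} Theorem~3.3, which gives $\|u\|_H \in H^1(\Omega)$ with $\partial_k \|u\|_H = \RRe(\sgn u, \partial_k u)$ a.e.

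First I would set $\|u\|_{H,\varepsilon} = \sqrt{\|u\|_H^2 + \varepsilon}$ for $\varepsilon > 0$. Since $s \mapsto \sqrt{s^2+\varepsilon}$ is smooth and Lipschitz, the scalar chain rule gives $\|u\|_{H,\varepsilon} \in H^1(\Omega)$ with $\partial_k \|u\|_{H,\varepsilon} = \|u\|_{H,\varepsilon}^{-1} \, \RRe(u,\partial_k u)$, using $u = \|u\|_H \, \sgn u$. Because $\|u\|_{H,\varepsilon} \ge \sqrt\varepsilon$ is bounded away from $0$, the outer function $\phi$ is Lipschitz on the range of $\|u\|_{H,\varepsilon}$, so a second scalar chain rule yields $g_\varepsilon := \phi(\|u\|_{H,\varepsilon}) \in L^\infty(\Omega) \cap W^{1,1}_{\mathrm{loc}}(\Omega)$ with
\[
\partial_k g_\varepsilon = \alpha \, \one_{[\|u\|_{H,\varepsilon}^\alpha < M]} \, \|u\|_{H,\varepsilon}^{\alpha-2} \, \RRe(u,\partial_k u)
\]
a.e.\ (the kink of $\phi$ at $s = M^{1/\alpha}$ is harmless, since $\partial_k \|u\|_{H,\varepsilon} = 0$ a.e.\ on the level set $\{\|u\|_{H,\varepsilon} = M^{1/\alpha}\}$, a standard property of scalar Sobolev functions). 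The Leibniz rule for the product of a bounded scalar $H^1$-function with a vector-valued $H^1$-function then gives $v_\varepsilon := g_\varepsilon \, u \in W^{1,1}_{\mathrm{loc}}(\Omega,H)$ with $\partial_k v_\varepsilon = (\partial_k g_\varepsilon) \, u + g_\varepsilon \, \partial_k u$.

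The heart of the proof is the passage $\varepsilon \downarrow 0$. Since $0 \le g_\varepsilon \le M$ and $g_\varepsilon \to \|u\|_H^\alpha \wedge M$ pointwise, dominated convergence gives $v_\varepsilon \to v$ and $g_\varepsilon \, \partial_k u \to (\|u\|_H^\alpha \wedge M) \, \partial_k u$ in $L_2(\Omega,H)$. For the remaining term I would write $(\partial_k g_\varepsilon) \, u = \alpha \, \one_{[\|u\|_{H,\varepsilon}^\alpha < M]} \, \|u\|_{H,\varepsilon}^{\alpha-2} \, \RRe(u,\partial_k u) \, u$ and use $\RRe(u,\partial_k u) = \|u\|_H \, \RRe(\sgn u,\partial_k u)$ together with $u = \|u\|_H \, \sgn u$ to identify its pointwise limit on $\{u \ne 0\}$ as $\alpha \, \one_{[\|u\|_H^\alpha < M]} \, \|u\|_H^\alpha \, \RRe(\sgn u,\partial_k u) \, \sgn u$, the claimed first summand, while on $\{u = 0\}$ both the $\varepsilon$-term and the limit vanish (the factor $u$, respectively $\sgn u$, being zero). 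Feeding the two limits into $\int_\Omega (v_\varepsilon, \partial_k \varphi)_H = - \int_\Omega (\partial_k v_\varepsilon, \varphi)_H$ for $\varphi \in C_c^\infty(\Omega,H)$ then identifies the weak derivative, and since the right-hand side lies in $L_2(\Omega,H)$ one concludes $v \in H^1(\Omega,H)$.

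The main obstacle is precisely the apparent singularity $\|u\|_{H,\varepsilon}^{\alpha-2}$ (present when $\alpha < 2$) in the term $(\partial_k g_\varepsilon) \, u$ near the zero set of $u$, where a naive majorant diverges. The decisive point, which supplies an $\varepsilon$-uniform $L_2$-majorant, is the cancellation
\[
\|(\partial_k g_\varepsilon) \, u\|_H \le \alpha \, \one_{[\|u\|_{H,\varepsilon}^\alpha < M]} \, \|u\|_{H,\varepsilon}^{\alpha-2} \, (\|u\|_H \, \|\partial_k u\|_H) \, \|u\|_H \le \alpha \, \|u\|_{H,\varepsilon}^{\alpha} \, \|\partial_k u\|_H \le \alpha M \, \|\partial_k u\|_H ,
\]
where the two estimates $\|u\|_H \le \|u\|_{H,\varepsilon}$ absorb the negative power. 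This is the step that genuinely exploits the product structure $v = g \, u$ rather than treating the (possibly non-$H^1$) scalar factor $\|u\|_H^\alpha \wedge M$ in isolation, and it is where I would expect to spend the most care.
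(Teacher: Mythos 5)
Your argument is correct, but it takes a genuinely different route from the paper's. The paper regularises the \emph{outer} function: it replaces $t \mapsto t^\alpha \wedge M$ by the $C^1$ functions $f_n(t) = \tfrac{1}{2}\,(t^\alpha + \sqrt{M^2+n^{-1}} - \sqrt{|t^\alpha-M|^2+n^{-1}})$, proves a chain rule for $C^1$ outer functions with $\sup_t t\,|f'(t)| < \infty$ (Lemma~\ref{lpaccapp104}, itself obtained by smoothing the norm via $\sqrt{\|u\|_H^2+\varepsilon}$ and approximating $u$ by $C^\infty \cap H^1$ functions, Lemma~\ref{lpaccapp102}), and then lets $n \to \infty$ using the uniform bounds $\sup_n \sup_t t\,|f_n'(t)| \le \alpha(M+1)$ and $\sup_n \|f_n\|_\infty \le M + \tfrac{1}{2}$ together with Lemma~\ref{lpaccapp103.4}. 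You instead keep the kinked function $t^\alpha \wedge M$, dispose of the kink with the Stampacchia chain rule for Lipschitz outer functions (gradient vanishing a.e.\ on level sets), regularise only the inner norm, and split via the Leibniz rule; the price is that you import $\|u\|_H \in H^1(\Omega)$ with $\partial_k \|u\|_H = \RRe(\sgn u, \partial_k u)$ from \cite{ArendtKreuter} Theorem~3.3, a fact the paper proves independently as Lemma~\ref{lpaccapp103.7}, so there is no circularity. The decisive estimate is the same in both proofs: your cancellation $\|u\|_H \le \sqrt{\|u\|_H^2+\varepsilon}$, which absorbs the negative power $(\|u\|_H^2+\varepsilon)^{(\alpha-2)/2}$ and yields the $\varepsilon$-uniform majorant $\alpha M\,\|\partial_k u\|_H$, is precisely the counterpart of the paper's uniform bound on $t\,f_n'(t)$. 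What you gain is a single limiting parameter and no explicit construction of smooth approximants; what you give up is self-containedness, since the Lipschitz chain rule and the product rule for a bounded scalar Sobolev factor times a vector-valued $H^1$ function are used as black boxes whose proofs, for vector-valued $u$, again reduce to the smooth approximation of Lemma~\ref{lpaccapp102}. One small remark: your computation produces $(\|u\|_H^\alpha \wedge M)\,\partial_k u$ for the second summand, which is what (\ref{elpaccapp104;2}) with $f(t) = t^\alpha \wedge M$ gives and what is used in Section~\ref{Spacc3}; the exponent $\alpha$ is missing in the displayed formula of the proposition, evidently a typo.
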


The proof involves some work.
We use the following approximation by smooth functions.

\begin{lemma} \label{lpaccapp102}
The space $C^\infty(\Omega,H) \cap H^1(\Omega,H)$ is dense in $H^1(\Omega,H)$.
\end{lemma}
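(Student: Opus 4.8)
The plan is to adapt the classical Meyers--Serrin theorem (``$H = W$'') to the vector-valued setting. The only structure of $H$ that enters is that of a Banach space: all integrals are taken in the Bochner sense, and convolution is against a \emph{scalar} mollifier, so the scalar arguments transcribe verbatim. First I would record the mollification facts. Fix a mollifier $\rho \in C_c^\infty(\Ri^d)$ with $\rho \geq 0$, $\int \rho = 1$ and $\supp \rho \subset \{|x| \leq 1\}$, and set $\rho_\varepsilon(x) = \varepsilon^{-d} \rho(x/\varepsilon)$. For $f \in L_2(\Ri^d, H)$ the Bochner convolution $(\rho_\varepsilon * f)(x) = \int_{\Ri^d} \rho_\varepsilon(x-y) \, f(y) \, dy$ defines an element of $C^\infty(\Ri^d, H)$, and $\rho_\varepsilon * f \to f$ in $L_2(\Ri^d, H)$ as $\varepsilon \downarrow 0$. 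Moreover, if $g \in H^1(\Omega, H)$ has compact support in $\Omega$ (extended by $0$), then $\partial_k(\rho_\varepsilon * g) = \rho_\varepsilon * \partial_k g$ once $\varepsilon$ is smaller than the distance from $\supp g$ to $\partial \Omega$, whence $\rho_\varepsilon * g \to g$ in $H^1(\Omega, H)$. Each of these is proved exactly as in the scalar case, differentiation under the Bochner integral being justified by dominated convergence.

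Next I would set up an exhaustion and a subordinate partition of unity. For $j \in \Ni$ put $\Omega_j = \{x \in \Omega : |x| < j \mbox{ and } \mathop{\rm dist}(x, \partial \Omega) > 1/j\}$, and $\Omega_0 = \Omega_{-1} = \emptyset$; then each $\overline{\Omega_j}$ is compact with $\overline{\Omega_j} \subset \Omega_{j+1}$ and $\bigcup_j \Omega_j = \Omega$. The open sets $U_j = \Omega_{j+1} \setminus \overline{\Omega_{j-1}}$ form a locally finite cover of $\Omega$ (indeed $U_j \cap U_{j'} = \emptyset$ when $|j - j'| \geq 2$), and I would choose a smooth partition of unity $(\psi_j)$ subordinate to $(U_j)$, so that $\psi_j \in C_c^\infty(U_j)$, $0 \leq \psi_j \leq 1$ and $\sum_j \psi_j = 1$ on $\Omega$.

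The approximation is then a diagonal argument. Given $u \in H^1(\Omega, H)$, the product rule gives $\psi_j u \in H^1(\Omega, H)$ with compact support in $U_j$. Fix $\varepsilon > 0$. By the mollification facts I may pick $\varepsilon_j > 0$ so small that $g_j := \rho_{\varepsilon_j} * (\psi_j u)$ is supported in $\Omega_{j+2} \setminus \overline{\Omega_{j-2}}$ and $\|g_j - \psi_j u\|_{H^1(\Omega, H)} \leq 2^{-j} \varepsilon$. Set $v = \sum_j g_j$. On any $\Omega_m$ only the finitely many $g_j$ with $j \leq m+1$ are nonzero, so the sum is locally finite and $v \in C^\infty(\Omega, H)$. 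Since $u = \sum_j \psi_j u$ with the same local finiteness, the series $\sum_j (g_j - \psi_j u)$ converges in $H^1(\Omega, H)$, giving $\|v - u\|_{H^1(\Omega, H)} \leq \sum_j 2^{-j} \varepsilon = 2 \varepsilon$. In particular $v \in H^1(\Omega, H)$, so $v \in C^\infty(\Omega, H) \cap H^1(\Omega, H)$ approximates $u$ to within $2\varepsilon$, which proves the density.

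The main point requiring care, rather than a genuine obstacle, is the bookkeeping that the locally finite sum $v$ lies in $H^1(\Omega, H)$ with distributional gradient $\sum_j \partial_k g_j$; this is exactly what the $H^1$-convergence of the series above delivers. The vector-valued mollification statements, while they must be stated, are only routine transcriptions of the scalar theory and present no difficulty for $H$-valued Bochner-integrable functions.
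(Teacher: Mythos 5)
Your proof is correct and is essentially the paper's own argument: the paper simply cites the scalar Meyers--Serrin theorem (\cite{AF} Theorem~3.17) and notes that the proof carries over verbatim to the $H$-valued setting, which is precisely the mollification--exhaustion--partition-of-unity argument you write out in full. No discrepancy to report.
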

\begin{proof}
This follows as in the scalar case in \cite{AF} Theorem~3.17.
\end{proof}

For an approximation argument the next lemma is useful.

\begin{lemma} \label{lpaccapp103.4}
For all $n \in \Ni$ let $u_n \in H^1(\Omega,H)$.
Let $u,g_1,\ldots,g_d \in L_2(\Omega,H)$.
Suppose that $\lim u_n = u$ in $L_2(\Omega,H)$ 
and $\lim \partial_k u_n = g_k$ in $L_2(\Omega,H)$  for all $k \in \{ 1,\ldots,d \} $.
Then $u \in H^1(\Omega,H)$ and 
$\partial_k u = g_k$ for all $k \in \{ 1,\ldots,d \} $.
\end{lemma}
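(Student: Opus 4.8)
The plan is to verify directly that $g_k$ is the weak $k$-th partial derivative of $u$ by testing against smooth compactly supported functions and passing to the limit. Recall that for $w \in H^1(\Omega,H)$ the weak derivative is characterised by
\[
\int_\Omega (\partial_k w, \varphi)_H = -\int_\Omega (w, \partial_k \varphi)_H
\]
for all $\varphi \in C_c^\infty(\Omega,H)$. Hence it suffices to establish this identity with $w$ replaced by $u$ and $\partial_k w$ replaced by $g_k$; since $g_k \in L_2(\Omega,H)$ by hypothesis, this yields both $u \in H^1(\Omega,H)$ and $\partial_k u = g_k$.

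First I would fix $k \in \{1,\ldots,d\}$ and a test function $\varphi \in C_c^\infty(\Omega,H)$. As each $u_n$ lies in $H^1(\Omega,H)$, the defining identity gives
\[
\int_\Omega (\partial_k u_n, \varphi)_H = -\int_\Omega (u_n, \partial_k \varphi)_H
\]
for every $n \in \Ni$.

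Next I would pass to the limit $n \to \infty$ on both sides. Both $\varphi$ and $\partial_k \varphi$ belong to $L_2(\Omega,H)$, being smooth with compact support, so the Cauchy--Schwarz inequality bounds
\[
\Big| \int_\Omega (\partial_k u_n - g_k, \varphi)_H \Big| \leq \|\partial_k u_n - g_k\|_{L_2(\Omega,H)} \, \|\varphi\|_{L_2(\Omega,H)}
\]
and similarly the right-hand side error is controlled by $\|u_n - u\|_{L_2(\Omega,H)} \, \|\partial_k \varphi\|_{L_2(\Omega,H)}$. By the assumed convergences $\partial_k u_n \to g_k$ and $u_n \to u$ in $L_2(\Omega,H)$, both error terms tend to $0$, whence
\[
\int_\Omega (g_k, \varphi)_H = -\int_\Omega (u, \partial_k \varphi)_H .
\]
Since $\varphi \in C_c^\infty(\Omega,H)$ was arbitrary, this is precisely the statement that $g_k$ is the weak derivative $\partial_k u$; combined with $g_k, u \in L_2(\Omega,H)$ it gives $u \in H^1(\Omega,H)$ with $\partial_k u = g_k$, and as $k$ was arbitrary the lemma follows.

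There is essentially no serious obstacle here: this is the standard closedness argument for weak derivatives, and the only thing to check is the legitimacy of exchanging limit and integral, which is immediate from Cauchy--Schwarz once one notes that the fixed test function $\varphi$ (resp.\ $\partial_k\varphi$) serves as a fixed $L_2$-pairing partner. The one point meriting a moment's care is the bookkeeping of the $H$-valued inner product in the integrand, but since the inner product of $L_2(\Omega,H)$ is exactly $\int_\Omega (\cdot,\cdot)_H$, nothing beyond the scalar argument is required.
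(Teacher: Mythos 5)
Your proof is correct and follows essentially the same route as the paper: integrate by parts against test functions for each $u_n$ and pass to the limit using the $L_2$-convergence, which is exactly the paper's (more tersely stated) argument. The only cosmetic difference is that you test against $H$-valued $\varphi \in C_c^\infty(\Omega,H)$ while the paper uses scalar test functions, but these characterisations of the weak derivative coincide.
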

\begin{proof}
Let $\varphi \in C_c^\infty(\Omega)$.
Let $k \in \{ 1,\ldots,d \} $.
Then $- \int_\Omega u_n \, \partial_k \varphi = \int_\Omega (\partial_k u_n) \, \varphi$
for all $n \in \Ni$.
Then the lemma follows by taking the limit $n \to \infty$.
\end{proof}

For the proof of Proposition~\ref{ppaccapp101} we shall approximate
the function $t \mapsto t^\alpha \wedge M$ with smooth functions.
The next technical lemma gives sufficient conditions in order to 
apply a chain rule. 
Note that we do not require that $f'$ is bounded.

\begin{lemma} \label{lpaccapp104}
Let $f \in C^1(0,\infty)$.
Suppose that $f$ is bounded, $\lim_{t \downarrow 0} f(t) = 0$, 
$\lim_{t \downarrow 0} t \, f'(t) = 0$ and 
$\sup_{t \in (0,\infty)} t \, |f'(t)| < \infty$.
Let $u \in H^1(\Omega,H)$.
Define $v = f(\|u\|_H) \, u$.
Then $v \in H^1(\Omega,H)$ and 
\begin{equation}
\partial_k v
= \left\{ \begin{array}{ll}
   \|u\|_H \, f'(\|u\|_H) \, \RRe(\sgn u, \partial_k u)_H \, \sgn u
     + f(\|u\|_H) \, \partial_k u & \mbox{on } [u \neq 0],  \\[5pt]
   0 & \mbox{on } [u = 0].
          \end{array} \right.
\label{elpaccapp104;2}
\end{equation}
for all $k \in \{ 1,\ldots,d \} $.
\end{lemma}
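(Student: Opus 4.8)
The plan is to realise $v$ as the composition $\Phi \circ u$ of $u$ with the map $\Phi \colon H \to H$ given by $\Phi(\xi) = f(\|\xi\|_H)\,\xi$ for $\xi \neq 0$ and $\Phi(0) = 0$. The hypotheses on $f$ are exactly what is needed to make $\Phi$ Lipschitz: on $H \setminus \{0\}$ the map $\Phi$ is Fr\'echet differentiable with
\[
D\Phi(\xi)\eta
= \|\xi\|_H \, f'(\|\xi\|_H) \, \RRe(\sgn\xi,\eta)_H \, \sgn\xi + f(\|\xi\|_H)\,\eta ,
\]
and hence $\|D\Phi(\xi)\|_{\cl(H)} \leq \|f\|_\infty + \sup_{t>0} t\,|f'(t)| =: K$, using $\|\sgn\xi\|_H \leq 1$ and $|\RRe(\sgn\xi,\eta)_H| \leq \|\eta\|_H$. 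Together with $\lim_{t\downarrow 0} f(t) = 0$ this shows that $\Phi$ extends continuously to $0$ and is $K$-Lipschitz on $H$. In particular the right-hand side $g_k$ of (\ref{elpaccapp104;2}) satisfies $\|g_k\|_H \leq K\,\|\partial_k u\|_H$, so $g_k \in L_2(\Omega,H)$, and $\|v\|_H \leq \|f\|_\infty \|u\|_H$ gives $v \in L_2(\Omega,H)$. I would then prove the chain rule in two steps.

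First I would treat the case $u \in C^\infty(\Omega,H) \cap H^1(\Omega,H)$ by regularising the norm. For $\varepsilon > 0$ put $\rho_\varepsilon = \sqrt{\|u\|_H^2 + \varepsilon^2}$ and $v_\varepsilon = f(\rho_\varepsilon)\,u$. Since $\|u\|_H^2 = (u,u)_H$ is smooth and $\rho_\varepsilon \geq \varepsilon > 0$, the function $v_\varepsilon$ is $C^1$, and the classical chain rule gives
\[
\partial_k v_\varepsilon
= f'(\rho_\varepsilon) \, \frac{\|u\|_H^2}{\rho_\varepsilon} \, \RRe(\sgn u, \partial_k u)_H \, \sgn u + f(\rho_\varepsilon)\,\partial_k u .
\]
Because $\|u\|_H^2 / \rho_\varepsilon \leq \rho_\varepsilon$ one has $|f'(\rho_\varepsilon)|\,\|u\|_H^2/\rho_\varepsilon \leq \rho_\varepsilon |f'(\rho_\varepsilon)| \leq \sup_{t>0}t|f'(t)|$, so $\|\partial_k v_\varepsilon\|_H \leq K\,\|\partial_k u\|_H$ uniformly in $\varepsilon$, whence $v_\varepsilon \in H^1(\Omega,H)$. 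As $\varepsilon \downarrow 0$ one has $\rho_\varepsilon \to \|u\|_H$ pointwise; on $[u \neq 0]$ the displayed expression converges to $g_k$, while on $[u = 0]$ it equals $f(\varepsilon)\,\partial_k u \to 0 = g_k$ by $\lim_{t\downarrow 0} f(t) = 0$. The uniform bound $K\,\|\partial_k u\|_H \in L_2$ lets me invoke dominated convergence to get $\partial_k v_\varepsilon \to g_k$ and $v_\varepsilon \to v$ in $L_2(\Omega,H)$, and Lemma~\ref{lpaccapp103.4} then yields $v \in H^1(\Omega,H)$ with $\partial_k v = g_k$.

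For general $u \in H^1(\Omega,H)$ I would use Lemma~\ref{lpaccapp102} to pick $u_n \in C^\infty(\Omega,H) \cap H^1(\Omega,H)$ with $u_n \to u$ in $H^1$. Writing $g_k = G(u)\,\partial_k u$ with $G(\xi) = D\Phi(\xi)$ for $\xi \neq 0$ and $G(0) = 0$, the first step gives $v_n := f(\|u_n\|_H)\,u_n \in H^1$ with $\partial_k v_n = G(u_n)\,\partial_k u_n$. The Lipschitz bound $\|v_n - v\|_H \leq K\,\|u_n - u\|_H$ yields $v_n \to v$ in $L_2(\Omega,H)$ at once. Passing to a subsequence so that $u_n \to u$ and $\partial_k u_n \to \partial_k u$ almost everywhere, the map $(\xi,\eta) \mapsto G(\xi)\eta$ is continuous — including at $\xi = 0$ — so $G(u_n)\,\partial_k u_n \to G(u)\,\partial_k u = g_k$ almost everywhere. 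Since $\|G(u_n)\partial_k u_n\|_H \leq K\,\|\partial_k u_n\|_H$ with $\|\partial_k u_n\|_H \to \|\partial_k u\|_H$ in $L_2$, a generalised dominated convergence argument upgrades this to convergence in $L_2(\Omega,H)$, and a final application of Lemma~\ref{lpaccapp103.4} finishes the proof.

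The step I expect to be the main obstacle is the continuity of $(\xi,\eta)\mapsto G(\xi)\eta$ at $\xi = 0$, which is precisely where the two limit hypotheses $\lim_{t\downarrow 0} f(t) = 0$ and $\lim_{t\downarrow 0} t\,f'(t) = 0$ are indispensable: they force $f(\|\xi\|_H)\,\eta$ and $\|\xi\|_H f'(\|\xi\|_H)\,\RRe(\sgn\xi,\eta)_H\,\sgn\xi$ both to vanish as $\xi \to 0$, matching $G(0) = 0$. The accompanying technical care is upgrading the almost-everywhere convergence of the derivatives to $L_2$-convergence when the natural dominating functions $K\,\|\partial_k u_n\|_H$ vary with $n$; this is handled by the $L_2$-convergence $\|\partial_k u_n\|_H \to \|\partial_k u\|_H$, which supplies the uniform integrability needed for the generalised dominated convergence theorem.
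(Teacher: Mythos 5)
Your proof is correct and follows essentially the same route as the paper's: both regularise the norm via $\sqrt{\|u\|_H^2+\varepsilon}$, reduce to smooth $u$ via Lemma~\ref{lpaccapp102}, and pass to the limit using the closedness of the weak derivative in Lemma~\ref{lpaccapp103.4}, with the hypotheses on $f$ supplying the uniform $L_2$-domination $K\,\|\partial_k u\|_H$. The only difference is that you perform the two limits in the opposite order (first $\varepsilon\downarrow 0$ for smooth $u$, then smooth approximation, versus the paper's fixed-$\varepsilon$ approximation followed by $\varepsilon=\tfrac1n\to 0$); your explicit Lipschitz bound for $\Phi$ and the generalised dominated convergence argument merely fill in details the paper leaves implicit.
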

\begin{proof}
Let $\varepsilon > 0$.
Define $v_\varepsilon = f(\sqrt{\|u\|_H^2 + \varepsilon}) \, u$.
If $u \in C^1(\Omega,H)$, then $v_\varepsilon \in C^1(\Omega,H)$ and
\begin{equation}
\partial_k v_\varepsilon
= \sqrt{\|u\|_H^2 + \varepsilon} \, f'(\sqrt{\|u\|_H^2 + \varepsilon}) \, 
   \frac{\RRe(u, \partial_k u)_H}{\sqrt{\|u\|_H^2 + \varepsilon}} \, 
  \frac{1}{\sqrt{\|u\|_H^2 + \varepsilon}} \, u
     + f(\sqrt{\|u\|_H^2 + \varepsilon}) \, \partial_k u
\label{elpaccapp104;1}
\end{equation}
for all $k \in \{ 1,\ldots,d \} $.
Then by Lemmas~\ref{lpaccapp102} and \ref{lpaccapp103.4}
this extends to all $u \in H^1(\Omega,H)$ and (\ref{elpaccapp104;1}) is valid.
Finally choose $\varepsilon = \frac{1}{n}$, take the limit $n \to \infty$
and use again Lemma~\ref{lpaccapp103.4}.
\end{proof}

Now we are able to prove Proposition~\ref{ppaccapp101}.

\begin{proof}[{\bf Proof of Proposition~\ref{ppaccapp101}.}]
For all $n \in \Ni$ define $f_n,f \colon (0,\infty) \to \Ri$ by
\begin{eqnarray*}
f(t) & = & t^\alpha \wedge M,  \\[5pt]
f_n(t) & = & \tfrac{1}{2} \, ( t^\alpha + \sqrt{M^2 + n^{-1}} - \sqrt{|t^\alpha - M|^2 + n^{-1}} ) .
\end{eqnarray*}
Then $\lim f_n(t) = f(t)$ for all $t \in (0,\infty)$.
Also $\lim_{t \downarrow 0} f_n(t) = 0$ for all $n \in \Ni$.
Let $n \in \Ni$.
Then $f_n \in C^1(0,\infty)$ and
\[
f_n'(t) 
= \tfrac{1}{2} \, \alpha \, t^{\alpha - 1} 
   \Big( 1 - \frac{t^\alpha - M}{\sqrt{(t^\alpha - M)^2 + n^{-1}}} \Big)
\]
for all $t \in (0,\infty)$.
In particular, $f_n$ is increasing.
Moreover, $\lim_{t \downarrow 0} t \, f_n'(t) = 0$.
In addition, $\lim_{n \to \infty} f_n'(t) = \alpha \, t^{\alpha - 1}$
if $t^\alpha \leq M$ and $\lim_{n \to \infty} f_n'(t) = 0$ if $t^\alpha > M$.

Let $n \in \Ni$ and $t \in (0,\infty)$.
If $t^\alpha \leq M$, then 
\[
0 \leq 
t \, f_n'(t) 
= \tfrac{1}{2} \, \alpha \, t^\alpha
    \Big( 1 + \frac{M - t^\alpha}{\sqrt{(t^\alpha - M)^2 + n^{-1}}} \Big)
\leq \alpha \, M
.  \]
Alternatively, if $t^\alpha > M$, then 
\begin{eqnarray*}
0 
& \leq & t \, f_n'(t) 
= \tfrac{1}{2} \, \alpha \, \frac{t^\alpha - M + M}{\sqrt{(t^\alpha - M)^2 + n^{-1}}} 
   \Big( \sqrt{(t^\alpha - M)^2 + n^{-1}} - \sqrt{(t^\alpha - M)^2} \Big) \\
& \leq & \tfrac{1}{2} \, \alpha \Big( 1 + \frac{M}{\sqrt{n^{-1}}} \Big)
   \sqrt{n^{-1}}
\leq \tfrac{1}{2} \, \alpha \, (1 + M)
.
\end{eqnarray*}
So 
\begin{equation}
\sup_{n \in \Ni} \sup_{t \in (0,\infty)} t \, |f'(t)|
\leq \alpha \, (M+1)
.  
\label{eppaccapp101;1}
\end{equation}
If $n \in \Ni$ and $t \in (0,\infty)$, then 
\[
0 
\leq f_n(t)
\leq \tfrac{1}{2} \, (t^\alpha + M+1 - \sqrt{|t^\alpha - M|^2 + n^{-1}} )
\leq \tfrac{1}{2} \, (t^\alpha + M+1 - |t^\alpha - M| )
= \tfrac{1}{2} + f(t)
\leq \tfrac{1}{2} + M
.  \]
So $f_n$ is bounded and even
\begin{equation}
\sup_{n \in \Ni} \sup_{t \in (0,\infty)} |f_n(t)|
\leq \tfrac{1}{2} + M
.  
\label{eppaccapp101;2}
\end{equation}
Hence all conditions of Lemma~\ref{lpaccapp104} are satisfied for all the $f_n$.

Let $u \in H^1(\Omega,H)$.
For all $n \in \Ni$ define $v_n = f_n(u) \, u$.
Then $v_n \in H^1(\Omega,H)$ with derivatives given by (\ref{elpaccapp104;2})
and $f$ replaced by $f_n$.
The Lebesgue dominated convergence theorem and the uniform bounds 
(\ref{eppaccapp101;2}) and (\ref{eppaccapp101;1})
imply that $\lim v_n = v$ and $\lim \partial_k v_n = \partial_k v$ in 
$L_2(\Omega,H)$ for all $k \in \{ 1,\ldots,d \} $.
Then the proposition follows from Lemma~\ref{lpaccapp103.4}.
\end{proof}

Almost the same arguments show that the norm of an 
$H^1(\Omega,H)$-function is in the Sobolev space.

\begin{lemma} \label{lpaccapp103.7}
Let $u \in H^1(\Omega,H)$.
Then $\|u\|_H \in H^1(\Omega)$ and 
$\partial_k \|u\|_H = \RRe(\sgn u, \partial_k u)$ for all 
$k \in \{ 1,\ldots,d \} $.
\end{lemma}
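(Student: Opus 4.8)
My approach follows the two-stage approximation already used for Lemma~\ref{lpaccapp104} and Proposition~\ref{ppaccapp101}. For $\varepsilon > 0$ set $g_\varepsilon = \sqrt{\|u\|_H^2 + \varepsilon}$. Since $g_\varepsilon \geq \sqrt\varepsilon$ this need not be square-integrable when $\mu(\Omega) = \infty$, so I would instead work with the shifted function $h_\varepsilon = g_\varepsilon - \sqrt\varepsilon$, which has the same weak gradient as $g_\varepsilon$ and satisfies $0 \leq h_\varepsilon \leq \|u\|_H$, hence $h_\varepsilon \in L_2(\Omega)$. The candidate derivative is $\RRe(u, \partial_k u)_H / \sqrt{\|u\|_H^2 + \varepsilon}$. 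I would record the pointwise limits, as $\varepsilon \downarrow 0$, that $h_\varepsilon \to \|u\|_H$ and $\RRe(u,\partial_k u)_H / \sqrt{\|u\|_H^2 + \varepsilon} \to \RRe(\sgn u, \partial_k u)$; the latter holds also on the set $[u = 0]$, where $\sgn u = 0$ and the numerator vanishes identically.

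\emph{Stage one.} Fix $\varepsilon > 0$ and show that $h_\varepsilon \in H^1(\Omega)$ with $\partial_k h_\varepsilon = \RRe(u, \partial_k u)_H / \sqrt{\|u\|_H^2 + \varepsilon}$. For $u \in C^\infty(\Omega,H) \cap H^1(\Omega,H)$ this is the classical chain rule. For general $u \in H^1(\Omega,H)$ I would approximate by smooth functions $u_n$ via Lemma~\ref{lpaccapp102} and pass to the limit with Lemma~\ref{lpaccapp103.4}. The convergence $h_\varepsilon(u_n) \to h_\varepsilon(u)$ in $L_2(\Omega)$ is immediate, since $\xi \mapsto \sqrt{\|\xi\|_H^2 + \varepsilon} - \sqrt\varepsilon$ is $1$-Lipschitz on $H$, giving $\|h_\varepsilon(u_n) - h_\varepsilon(u)\|_{L_2(\Omega)} \leq \|u_n - u\|_{L_2(\Omega,H)}$.

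\emph{Stage two.} Apply Lemma~\ref{lpaccapp103.4} a second time with $\varepsilon = \tfrac{1}{n} \downarrow 0$. Here $h_{1/n} \to \|u\|_H$ in $L_2(\Omega)$ by dominated convergence, the dominating function being $\|u\|_H \in L_2(\Omega)$, and $\partial_k h_{1/n} \to \RRe(\sgn u, \partial_k u)$ in $L_2(\Omega)$ again by dominated convergence, using the pointwise limit noted above together with the uniform bound $\bigl| \RRe(u, \partial_k u)_H / \sqrt{\|u\|_H^2 + 1/n} \bigr| \leq \|\partial_k u\|_H \in L_2(\Omega)$. Lemma~\ref{lpaccapp103.4} then delivers $\|u\|_H \in H^1(\Omega)$ with $\partial_k \|u\|_H = \RRe(\sgn u, \partial_k u)$, as claimed.

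I expect the one genuinely nontrivial point to be the $L_2$-convergence of the gradients in stage one, namely $\RRe(u_n, \partial_k u_n)_H / \sqrt{\|u_n\|_H^2 + \varepsilon} \to \RRe(u, \partial_k u)_H / \sqrt{\|u\|_H^2 + \varepsilon}$ in $L_2(\Omega)$; the awkwardness is the division, which makes a naive product estimate land only in $L_1$. I would handle it by passing to a subsequence along which $u_n \to u$ and $\partial_k u_n \to \partial_k u$ almost everywhere (whence the quotients converge a.e.), and then invoke the generalised dominated convergence theorem with the dominating sequence $\|\partial_k u_n\|_H$, which converges to $\|\partial_k u\|_H$ in $L_2(\Omega)$; since the limit is the same for every subsequence, the whole sequence converges. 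This is exactly the step left implicit in the proof of Lemma~\ref{lpaccapp104}, so it may be dispatched by the same reference.
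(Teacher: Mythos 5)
Your proposal is correct and follows essentially the same route as the paper: regularise the norm by $\sqrt{\|u\|_H^2+\varepsilon}$, establish the identity first for smooth $u$ via Lemma~\ref{lpaccapp102}, and then let $\varepsilon=\tfrac1n\downarrow0$ with dominated convergence. The only deviation is technical: the paper sidesteps the non-integrability of $\sqrt{\|u\|_H^2+\varepsilon}$ by working with the weak-derivative identity tested against $\varphi\in C_c^\infty(\Omega)$ (which is local), whereas you subtract the constant $\sqrt\varepsilon$ so that Lemma~\ref{lpaccapp103.4} applies directly — a legitimate fix, and your explicit treatment of the $L_2$-convergence of the quotients (subsequence plus generalised dominated convergence) fills in a step the paper leaves implicit.
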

\begin{proof}
Let $\varepsilon > 0$.
For all $u \in H^1(\Omega,H)$ define $u_\varepsilon \colon \Omega \to H$ by
$u_\varepsilon = \sqrt{\|u\|_H^2 + \varepsilon}$.
Let $\varphi \in C_c^\infty(\Omega)$ and $k \in \{ 1,\ldots,d \} $.
If $u \in C^\infty(\Omega,H) \cap H^1(\Omega,H)$, 
then $u_\varepsilon \in C^\infty(\Omega,H)$ with classical 
partial derivative
$\partial_k u_\varepsilon = \frac{\RRe(u, \partial_k u)_H}{u_\varepsilon}$.
Hence 
\begin{equation} 
- \int_\Omega u_\varepsilon \, \partial_k \varphi
= \int_\Omega \frac{\RRe(u, \partial_k u)_H}{u_\varepsilon} \, \varphi
.
\label{elpaccapp103.7;1}
\end{equation}
Using approximation and Lemma~\ref{lpaccapp102}, it follows that 
(\ref{elpaccapp103.7;1}) is valid for all $u \in H^1(\Omega,H)$.
Finally choose $\varepsilon = \frac{1}{n}$ and take the limit $n \to \infty$.
\end{proof}

\section{Strict convexity of $L_p(\Omega,H)$} \label{Spaccapp2}
As before let $(\Omega,\cb,\mu)$ be a $\sigma$-finite measure space
and $H$ a Hilbert space.
In order to make this paper more self-contained, we give a direct 
proof of the following theorem.
At the end of this section we give information on more general results.

\begin{thm} \label{tpaccapp201}
Let $p \in (1,\infty)$ and $u,v \in L_p(\Omega,H)$ with 
$\|u\|_p = \|v\|_p = 1$.
If $\|u+v\|_p = 2$, then $u = v$.
\end{thm}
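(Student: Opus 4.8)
The plan is to prove strict convexity of $L_p(\Omega,H)$ by reducing to two scalar inequalities: the strict convexity of the scalar $L_p$ space (i.e. the case $H=\Ci$) together with the strict convexity of the Hilbert norm $\|\cdot\|_H$ on $H$ itself. The key observation is that for $u,v \in L_p(\Omega,H)$ we always have the pointwise triangle inequality $\|u(x)+v(x)\|_H \leq \|u(x)\|_H + \|v(x)\|_H$, and hence
\[
\|u+v\|_p = \big\| \|u+v\|_H \big\|_{L_p(\Omega)} \leq \big\| \|u\|_H + \|v\|_H \big\|_{L_p(\Omega)} \leq \big\| \|u\|_H \big\|_{L_p(\Omega)} + \big\| \|v\|_H \big\|_{L_p(\Omega)} = 2,
\]
where the last step is the scalar Minkowski inequality applied to the nonnegative functions $\|u\|_H, \|v\|_H \in L_p(\Omega)$. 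The hypothesis $\|u+v\|_p = 2$ forces equality throughout this chain.

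First I would extract the consequences of the two equalities. Equality in the final (scalar Minkowski) step, combined with the known strict convexity of the scalar space $L_p(\Omega)$ applied to the functions $\|u\|_H$ and $\|v\|_H$ (both of $L_p$-norm $1$), yields $\|u\|_H = \|v\|_H$ as functions on $\Omega$, i.e. $\|u(x)\|_H = \|v(x)\|_H$ for almost every $x$. Equality in the first (monotonicity of the $L_p$-norm under the pointwise inequality $\|u+v\|_H \leq \|u\|_H + \|v\|_H$) forces $\|u(x)+v(x)\|_H = \|u(x)\|_H + \|v(x)\|_H$ for almost every $x$, since a strict inequality on a set of positive measure would strictly decrease the $L_p$-norm.

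Next I would exploit the pointwise norm-additivity. For each fixed $x$ where $\|u(x)+v(x)\|_H = \|u(x)\|_H + \|v(x)\|_H$ holds, the equality case in the triangle inequality of the Hilbert space $H$ applies: two vectors $\xi,\eta \in H$ satisfy $\|\xi+\eta\|_H = \|\xi\|_H + \|\eta\|_H$ if and only if one is a nonnegative multiple of the other. Expanding $\|\xi+\eta\|_H^2 = \|\xi\|_H^2 + 2\RRe(\xi,\eta)_H + \|\eta\|_H^2$ and comparing with $(\|\xi\|_H + \|\eta\|_H)^2$ shows equality forces $\RRe(\xi,\eta)_H = \|\xi\|_H \, \|\eta\|_H$, which by Cauchy--Schwarz means $\xi$ and $\eta$ are positively proportional. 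Combined with the already-established fact $\|u(x)\|_H = \|v(x)\|_H$, the proportionality constant must be $1$, so $u(x) = v(x)$ for almost every $x$, giving $u = v$.

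The main obstacle I anticipate is purely bookkeeping rather than conceptual: making the two equality-case arguments rigorous on the level of measurable functions, i.e. passing from ``equality in an integral inequality'' to ``pointwise equality almost everywhere.'' This requires care to verify that each intermediate inequality is genuinely an equality (so that no measure is lost at any step) and to handle the null set where $u(x)=0$ or $v(x)=0$ separately, since the $\sgn$ normalisation degenerates there. The scalar strict convexity that underpins the Minkowski equality step should be treated as the base case; I would either prove it first or invoke it directly, since the vector-valued statement genuinely reduces to it together with the elementary Hilbert-space equality case.
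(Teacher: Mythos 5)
Your proof is correct, but it follows a genuinely different route from the paper's. You factor the statement into two independent equality cases: (i) strict convexity of the scalar space $L_p(\Omega)$, applied to the nonnegative functions $\|u\|_H$ and $\|v\|_H$ via the chain $\|u+v\|_p \leq \bigl\| \, \|u\|_H + \|v\|_H \, \bigr\|_{L_p(\Omega)} \leq 2$, which yields $\|u\|_H = \|v\|_H$ almost everywhere; and (ii) the equality case of the triangle inequality in the Hilbert space $H$, applied pointwise, which yields positive proportionality and hence $u = v$ once the norms agree. The paper instead runs a single self-contained computation: it expands $\|u+v\|_p^p = \int_\Omega \|u+v\|_H \, \|u+v\|_H^{p-1}$, applies the pointwise triangle inequality and then H\"older twice, and extracts the pointwise information from the H\"older equality case (its Lemma~\ref{lpaccapp204}, which shows $\|u\|_H^p$ and $\|v\|_H^p$ are both proportional to $\|u+v\|_H^{(p-1)q}$, hence to each other with constant $1$); the Hilbert-space triangle equality case is its Lemma~\ref{lpaccapp202}, which you also use. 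The two arguments are essentially equivalent in substance — the paper's H\"older-equality step is exactly the standard proof of the scalar strict convexity you invoke as a black box — but your version is more modular and makes the reduction ``vector-valued strict convexity $=$ scalar strict convexity $+$ strict convexity of $H$'' explicit, which is the pattern behind the more general result (for strictly convex range spaces $E$) mentioned at the end of the paper's appendix. The paper's version buys self-containedness, which is its stated goal; yours buys a cleaner conceptual decomposition at the cost of needing the scalar case as a separate input. The only point to make fully rigorous, which you already flag, is that equality in $\bigl\| \, \|u+v\|_H \, \bigr\|_{L_p(\Omega)} = \bigl\| \, \|u\|_H + \|v\|_H \, \bigr\|_{L_p(\Omega)}$ under the pointwise domination $\|u+v\|_H \leq \|u\|_H + \|v\|_H$ forces pointwise equality a.e.\ (integrate the nonnegative difference of $p$-th powers), and that on the set where $v(x) = 0$ one concludes $u(x)=0$ directly from $\|u\|_H = \|v\|_H$ rather than from proportionality.
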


For the proof of Theorem~\ref{tpaccapp201} we use three lemmas.

\begin{lemma} \label{lpaccapp202}
Let $\xi,\eta \in H$ and suppose  that $\|\xi + \eta\|_H = \|\xi\|_H + \|\eta\|_H$.
If $\eta \neq 0$, then there is a $\lambda \in [0,\infty)$ such that $\xi = \lambda \, \eta$.
\end{lemma}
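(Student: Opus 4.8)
The plan is to reduce the equality case of the triangle inequality to a single statement about the real part of the inner product, and then to exhibit $\lambda$ explicitly as a ratio of norms. Throughout I write $a = \|\xi\|_H$ and $b = \|\eta\|_H$, and I use that $H$ carries an inner product $(\cdot,\cdot)_H$ with $\|\zeta\|_H^2 = (\zeta,\zeta)_H$.

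First I would expand both sides of the hypothesis. Squaring the assumption gives $\|\xi+\eta\|_H^2 = (a+b)^2 = a^2 + 2ab + b^2$, whereas expanding the left-hand side via the inner product yields
\[
\|\xi+\eta\|_H^2 = a^2 + 2\,\RRe(\xi,\eta)_H + b^2 .
\]
Comparing the two expressions, the entire content of the hypothesis is the single identity $\RRe(\xi,\eta)_H = ab = \|\xi\|_H\,\|\eta\|_H$.

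The key step is then a direct norm computation that produces the multiplier without invoking the equality case of Cauchy--Schwarz as a separate fact. Using $\RRe(\xi,\eta)_H = ab$ in the middle term, I would compute
\[
\big\| b\,\xi - a\,\eta \big\|_H^2
= b^2 a^2 - 2ab\,\RRe(\xi,\eta)_H + a^2 b^2
= 2a^2 b^2 - 2a^2 b^2
= 0 .
\]
Hence $b\,\xi = a\,\eta$, that is $\|\eta\|_H\,\xi = \|\xi\|_H\,\eta$. Since $\eta \neq 0$ we have $b = \|\eta\|_H > 0$, so dividing by $b$ gives $\xi = \lambda\,\eta$ with $\lambda = a/b = \|\xi\|_H/\|\eta\|_H \in [0,\infty)$, as required.

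The argument is short and presents no serious obstacle; the only point demanding a little care is that $H$ may be complex over $\Ci$, so that a bare collinearity statement would yield only a complex multiplier. The virtue of computing $\|b\,\xi - a\,\eta\|_H^2$ directly is precisely that it delivers a nonnegative real scalar $\lambda$ at once, circumventing any separate verification that the coefficient is real and nonnegative.
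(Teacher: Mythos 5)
Your proof is correct. Both you and the paper begin identically, by observing that the hypothesis is equivalent to $\RRe(\xi,\eta)_H = \|\xi\|_H\,\|\eta\|_H$; you diverge in how collinearity is extracted from this. The paper invokes the equality case of the Cauchy--Schwarz inequality as a known fact, which only yields $\xi = \lambda\,\eta$ for some $\lambda \in \Ci$, and must then perform a second step (from $|1+\lambda| = |\lambda| + 1$) to conclude $\lambda \in [0,\infty)$. Your direct expansion of $\|\,\|\eta\|_H\,\xi - \|\xi\|_H\,\eta\,\|_H^2 = 0$ is self-contained and delivers the nonnegative real scalar $\lambda = \|\xi\|_H/\|\eta\|_H$ in one stroke, so no separate reality argument is needed. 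The trade-off is purely one of exposition: the paper's version is shorter on the page because it outsources the equality case of Cauchy--Schwarz, while yours is more elementary and, as you note, sidesteps the subtlety that over $\Ci$ mere collinearity gives only a complex multiplier.
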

\begin{proof}
The equality implies that $\RRe (\xi,\eta)_H = \|\xi\|_H \, \|\eta\|_H$.
This gives equality in the Cauchy--Schwarz inequality.
Hence there is a $\lambda \in \Ci$ such that $\xi = \lambda \, \eta$.
Using again the equality, one deduces that $|1 + \lambda| = |\lambda| + 1$
and therefore $\lambda \in [0,\infty)$.
\end{proof}

Let $p,q \in (1,\infty)$ and suppose that $\frac{1}{p} + \frac{1}{q} = 1$.

\begin{lemma} \label{lpaccapp203}
Let $a,b \in [0,\infty)$.
Then $a \, b \leq \frac{1}{p} \, a^p + \frac{1}{q} \, b^q$
and the equality holds if and only if $a^p = b^q$.
\end{lemma}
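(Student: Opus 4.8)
The plan is to deduce both the inequality and the equality clause from the strict concavity of the logarithm. First I would dispose of the boundary cases: if $a = 0$ or $b = 0$, then the left-hand side $a \, b$ vanishes while the right-hand side is a sum of nonnegative terms, so the inequality holds trivially; moreover, equality then forces the surviving term $\frac{1}{q} \, b^q$ (respectively $\frac{1}{p} \, a^p$) to vanish, hence $a = b = 0$, which is exactly the situation $a^p = b^q$. It therefore remains to treat $a, b \in (0,\infty)$.

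For $a, b > 0$ the key observation is that the hypothesis $\frac{1}{p} + \frac{1}{q} = 1$ exhibits $a \, b$ as a weighted geometric mean of $a^p$ and $b^q$, namely
\[
\log(a \, b) = \log a + \log b = \tfrac{1}{p} \, \log a^p + \tfrac{1}{q} \, \log b^q
.
\]
Since $\log$ is concave on $(0,\infty)$, Jensen's inequality for the two points $a^p$ and $b^q$ with weights $\frac{1}{p}$ and $\frac{1}{q}$ gives
\[
\tfrac{1}{p} \, \log a^p + \tfrac{1}{q} \, \log b^q
\leq \log\Big( \tfrac{1}{p} \, a^p + \tfrac{1}{q} \, b^q \Big)
,
\]
and exponentiating both sides yields $a \, b \leq \frac{1}{p} \, a^p + \frac{1}{q} \, b^q$, as required.

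For the equality characterisation I would invoke the \emph{strict} concavity of $\log$. Because the weights $\frac{1}{p}, \frac{1}{q}$ both lie in $(0,1)$ and are thus nondegenerate, equality in the displayed Jensen step holds if and only if the two interpolated points coincide, that is $a^p = b^q$; conversely, if $a^p = b^q$ then both sides of the asserted inequality reduce to $a^p$, so equality indeed holds. The inequality itself needs only plain concavity, so the one place demanding a little care is precisely this sharp equality clause, where strict concavity is essential. (An entirely elementary alternative, should one prefer to avoid invoking Jensen, is to fix $b > 0$, set $\varphi(a) = \frac{1}{p} \, a^p + \frac{1}{q} \, b^q - a \, b$, and observe that $\varphi'' > 0$ on $(0,\infty)$ forces the unique critical point $a = b^{1/(p-1)}$ to be a strict global minimum, at which $a^p = b^q$ and $\varphi = 0$.)
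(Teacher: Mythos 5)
Your proof is correct and follows essentially the same route as the paper, whose entire proof is the one-line remark that the lemma ``follows from the concavity of the logarithm''; you have simply carried out that argument in full, including the boundary cases and the use of strict concavity for the equality clause. No discrepancies.
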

\begin{proof}
This follows from the concavity of the logarithm.
\end{proof}

\begin{lemma} \label{lpaccapp204}
Let $f \in L_p(\Omega)$ and $g \in L_q(\Omega)$ with $f \neq 0$ and $g \neq 0$.
Suppose that $\int_\Omega |f| \, |g| = \|f\|_{L_p(\Omega)} \, \|g\|_{L_q(\Omega)}$.
Then there exists a $\lambda > 0$ such that 
$|f|^p = \lambda \, |g|^q$ almost everywhere.
\end{lemma}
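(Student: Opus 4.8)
The plan is to read off the equality case of Hölder's inequality from the pointwise equality case of Young's inequality already recorded in Lemma~\ref{lpaccapp203}. The whole argument is an integration argument: one bounds the integrand pointwise, integrates to recover the hypothesised equality, and then propagates the equality back to the integrand.

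First I would normalise. Since $f \neq 0$ and $g \neq 0$ we have $\|f\|_{L_p(\Omega)} > 0$ and $\|g\|_{L_q(\Omega)} > 0$, so after replacing $f$ by $f / \|f\|_{L_p(\Omega)}$ and $g$ by $g / \|g\|_{L_q(\Omega)}$ I may assume $\|f\|_{L_p(\Omega)} = \|g\|_{L_q(\Omega)} = 1$, in which case the hypothesis becomes $\int_\Omega |f| \, |g| = 1$. Applying Lemma~\ref{lpaccapp203} with $a = |f(x)|$ and $b = |g(x)|$ gives $|f(x)| \, |g(x)| \leq \frac{1}{p} |f(x)|^p + \frac{1}{q} |g(x)|^q$ for almost every $x$. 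Integrating and using the normalisation yields $\int_\Omega |f| \, |g| \leq \frac{1}{p} + \frac{1}{q} = 1$, and the hypothesis forces this to be an equality. Consequently the nonnegative integrable function $\frac{1}{p} |f|^p + \frac{1}{q} |g|^q - |f| \, |g|$ has vanishing integral, hence is zero almost everywhere.

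The equality clause of Lemma~\ref{lpaccapp203} then gives $|f(x)|^p = |g(x)|^q$ for almost every $x$. Undoing the normalisation, this reads $|f|^p / \|f\|_{L_p(\Omega)}^p = |g|^q / \|g\|_{L_q(\Omega)}^q$ almost everywhere, so the claim holds with $\lambda = \|f\|_{L_p(\Omega)}^p / \|g\|_{L_q(\Omega)}^q > 0$.

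There is no serious obstacle here: the only point deserving a word of care is the measure-theoretic step that a nonnegative integrable function with zero integral vanishes almost everywhere, which is what lets one pass from the integrated equality to the pointwise equality on a set of full measure and then invoke the equality condition of Lemma~\ref{lpaccapp203}; everything else is a direct computation.
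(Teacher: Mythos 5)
Your proof is correct and follows essentially the same route as the paper's: normalise both functions, apply the pointwise Young inequality of Lemma~\ref{lpaccapp203}, integrate, and use the equality case to force $|f|^p = |g|^q$ almost everywhere. The only difference is that you spell out the step that a nonnegative integrable function with vanishing integral is zero almost everywhere, which the paper leaves implicit.
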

\begin{proof}
We may assume that $\|f\|_{L_p(\Omega)} = 1 = \|g\|_{L_q(\Omega)}$.
Then Lemma~\ref{lpaccapp203} gives
\[
1 = \int_\Omega |f| \, |g|
\leq \int_\Omega \tfrac{1}{p} \, |f|^p + \tfrac{1}{q} \, |g|^q
= \tfrac{1}{p} \, \|f\|_{L_p(\Omega)} + \tfrac{1}{q} \, \|g\|_{L_q(\Omega)}
= 1
.  \]
Hence $|f| \, |g| = \tfrac{1}{p} \, |f|^p + \frac{1}{q} \, |g|^q$ almost everywhere
and the lemma follows from Lemma~\ref{lpaccapp203}.
\end{proof}

\begin{proof}[{\bf Proof of Theorem~\ref{tpaccapp201}.}]
Using the triangle inequality on $H$, twice the H\"older inequality on $L_p(\Omega)$
and the assumption $\|u+v\|_p = \|u\|_p + \|v\|_p$ one obtains
\begin{eqnarray*}
\|u + v\|_p^p
& = & \int_\Omega \|u+v\|_H \, \|u+v\|_H^{p-1}  \\
& \leq & \int_\Omega \|u\|_H \, \|u+v\|_H^{p-1} + \int_\Omega \|v\|_H \, \|u+v\|_H^{p-1}   \\
& \leq & \Big( \int_\Omega \|u\|_H^p \Big)^{1/p} \Big( \int_\Omega \|u+v\|_H^{(p-1) q} \Big)^{1/q} 
   + \Big( \int_\Omega \|v\|_H^p \Big)^{1/p} \Big( \int_\Omega \|u+v\|_H^{(p-1) q} \Big)^{1/q}   \\
& = & (\|u\|_p + \|v\|_p) \, \|u+v\|_p^{p/q} 
= \|u+v\|_p^{\frac{p}{q} + 1}  
= \|u+v\|_p^p
.
\end{eqnarray*}
Hence all three inequalities are equalities.
The first gives that there is a null-set $N_1 \subset \Omega$ such that 
$\|u+v\|_H(x) = \|u\|_H(x) + \|v\|_H(x)$ for all
$x \in \Omega \setminus N_1$ such that $\|u+v\|_H(x) \neq 0$.
Recall that $\|u\|_p = 1$, so $\|u\|_H \neq 0 \in L_p(\Omega)$.
Similarly $\|u+v\|_H \neq 0 \in L_p(\Omega)$ and therefore 
$\|u+v\|_H^{p-1} \neq 0 \in L_q(\Omega)$.
Hence the equality in the first H\"older inequality together with 
Lemma~\ref{lpaccapp204} gives that there are $\alpha > 0$ 
and a null-set $N_2 \subset \Omega$ such that 
$\|u\|_H^p(x) = \alpha \, \|u+v\|_H^{(p-1)q}(x)$ for all $x \in \Omega \setminus N_2$.
Similarly there are $\beta > 0$ and a null-set $N_3 \subset \Omega$ such that 
$\|v\|_H^p(x) = \beta \, \|u+v\|_H^{(p-1)q}(x)$ 
for all $x \in \Omega \setminus N_3$.
Hence $\|u\|_H^p = \gamma \, \|v\|_H^p$ on $\Omega \setminus (N_2 \cup N_3)$, where 
$\gamma = \frac{\alpha}{\beta}$.
Since $\|u\|_p = \|v\|_p = 1$, one deduces that $\gamma = 1$.

Now let $x \in \Omega \setminus (N_1 \cup N_2 \cup N_3)$.
If $\|u+v\|_H(x) = 0$, then 
$\|u\|_H^p(x) = \alpha \, \|u+v\|_H^{(p-1)q}(x) = 0$ and $u(x) = 0$.
Similarly $v(x) = 0$ and therefore $u(x) = v(x)$.
Alternatively, if $\|u+v\|_H(x) \neq 0$, then $v(x) \neq 0$
since $x \not\in N_3$.
Moreover, $\|u(x) + v(x)\|_H = \|u(x)\|_H + \|v(x)\|_H$
and Lemma~\ref{lpaccapp202} implies that there is a $\lambda \in [0,\infty)$
such that $u(x) = \lambda \, v(x)$.
But $\|u(x)\|_H^p = \|v(x)\|_H^p$ and consequently $u(x) = v(x)$.
Therefore $u = v$ almost everywhere.
\end{proof}

With a small modification one can prove that $L_p(\Omega,E)$ is strictly convex
if $E$ is strictly convex and $p \in (1,\infty)$.
In fact, a stronger result than Theorem~\ref{tpaccapp201} is known.
The space $L_p(\Omega,E)$ is uniformly convex
if $E$ is uniformly convex and $p \in (1,\infty)$.
See \cite{DHM} and the references therein.

\subsection*{Acknowledgement.} 
The second-named author is most grateful for the hospitality extended
to him during a fruitful stay at Ulm University.
He wishes to thank Ulm University for financial support.

\end{document}